\documentclass{amsart}

\usepackage[utf8]{inputenc}
\usepackage{amsmath,amssymb,amsthm}
\usepackage{tikz-cd}
\usepackage{mathtools}
\usepackage{enumitem}
\usepackage{bbold}
\usepackage{hyperref}

\newtheorem{theorem}{Theorem}
\newtheorem{lemma}{Lemma}[subsection]
\newtheorem{definition}{Definition}[section]
\newtheorem{proposition}{Proposition}[section]
\newtheorem*{corollary}{Corollary}
\newtheorem{example}{Example}[section]

\newtheorem{question}{Question}
\newtheorem*{notation}{Notation}

\newtheorem{remark}{Remark}[section]

%G zip stuff

\newcommand{\GZip}{\mathop{\text{$G$-{\tt Zip}}}\nolimits}

\newcommand{\GZipmu}{\mathop{\GZip^{\mu}}\nolimits}

% Chow k theory
\newcommand{\A}{CH^{\bullet}}

\newcommand{\Kz}{K_{0}}
\newcommand{\Kn}{K_{n}}
\newcommand{\R}{R}
\newcommand{\rkWeyl}{|W|}
\newcommand{\rkLWeyl}{|W_{L}|}
\newcommand{\Kphi}{K_{0,\varphi}}
\newcommand{\KTG}{\Kphi^{T}(G)}
\newcommand{\KLG}{\Kphi^{L}(G)}
\newcommand{\KTnG}{K_{n,\varphi}^{T}(G)}

%reductive groups shortcuts

\newcommand{\der}{\rm der}

\newcommand{\SL}{\mathsf{SL}}

\newcommand{\Diag}{\mathsf{Diag}}

%Shimura varieties

\DeclareMathOperator{\Cent}{Cent}

\DeclareMathOperator{\End}{End}
\DeclareMathOperator{\Vect}{Vect}
\DeclareMathOperator{\Lie}{Lie}
\DeclareMathOperator{\Rep}{Rep}

\title{Grothendieck group of the stack of G-Zips}
\author{Simon Cooper}

\begin{document}

\begin{abstract}
    Given a connected reductive group G over the finite field of order p and a cocharacter of G over the algebraic closure of the finite field, we can define G-Zips. The collection of these G-Zips form an algebraic stack which is a stack quotient of G. In this paper we study the K-theory rings of this quotient stack, focusing on the Grothendieck group. Under the additional assumption that the derived group is simply connected, the Grothendieck group is described as a quotient of the representation ring of the Levi subgroup centralising the cocharacter.
\end{abstract}
\maketitle
\section{Introduction}
	Throughout the paper, unless stated otherwise, $p>0$ is a prime and $k$ an algebraic closure of $\mathbb{F}_{p}$.
	The study of moduli spaces in positive characteristic has been enriched with the introduction of the algebraic stack $\GZipmu$ in \cite{PWZ} building on the work of \cite{MoonenWedhorn}. The input is a \textit{cocharacter datum}, which is defined to be a pair $(G,\mu)$, where $G$ is a connected reductive group over $\mathbb{F}_{p}$ and $\mu \colon \mathbb{G}_{m} \rightarrow G_{k}$ is a cocharacter. Given a cocharacter datum $(G,\mu)$, Pink-Wedhorn-Ziegler define a $G$-Zip of type $\mu$ over a base $k$-scheme $S$ \cite{PWZ}. The category of $G$-Zips of type $\mu$ forms an algebraic stack denoted $\GZipmu$. We are interested in stacks $X$ admitting a morphism $X \rightarrow \GZipmu$. Following \cite{CooperGoldring} and \cite{Goldring-Koskivirta-rank-2-cones} such a morphism is called a \textit{Zip period map}. The typical example is for $X$ the special fibre of a Shimura variety of Hodge type, see \cite{MoonenWedhorn}. One vague general question about Zip period maps, initially raised as Question A in \cite{GoldringKoskivirta2}, is the following.
	\begin{question}
		\label{q-geom-zip}
		Let $X$ be a stack and $X \rightarrow \GZipmu$ a Zip period map. What, if anything, does the geometry of $\GZipmu$ tell us about the geometry of $X$?
	\end{question}
	Clearly a fundamental part of an answer lies in understanding the geometry of $\GZipmu$. Indeed, this is the trivial case $X=\GZipmu$ of Question \ref{q-geom-zip}. In this paper we hope to contribute towards answering this by studying one specific invariant: the $\Kz$-ring.
	\begin{question}
		\label{q-kzero}
		Let $(G,\mu)$ be a cocharacter datum. What is $\Kz(\GZipmu)$?
	\end{question}
    $\GZipmu$ admits a presentation as a quotient stack $\GZipmu \simeq [G_{k}/E]$ (\ref{prop-gzip-presentation}) and for the sake of this paper we define $\Kz(\GZipmu) \coloneqq \Kz^{E}(G_{k})$ the $\Kz$-ring of the exact category $\Vect^{E}(G_{k})$ of $E$-equivariant vector bundles on $G_{k}$. Note that this should coincide with $\Kz$ of the stack $\GZipmu$ for any reasonable definition of $K$-theory of stacks.
	The motivation for focusing on this invariant in particular is that it is closely related to the Chow ring with rational coefficients $\A(\GZipmu)$, which is known by work of Brokemper \cite{Brokemper}. In this paper we provide a partial answer to Question \ref{q-kzero}: we compute $\Kz(\GZipmu)$ in the case where $G^{\der}$ is simply connected. \begin{theorem}
		\label{th-main}
		Let $(G,\mu)$ be a cocharacter datum. Suppose that $G^{\der}$ is simply connected. 
		\[\Kz(\GZipmu) \cong \R(L)/I\R(L)\]
		where $I \coloneqq \{c-\varphi(c) \mid c \in \R(G)\} \subset \R(G)$, with $\varphi$ denoting the map induced from Frobenius and $L \coloneqq Cent_{G}(\mu)$.
	\end{theorem}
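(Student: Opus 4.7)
The strategy parallels Brokemper's approach to the Chow ring $\A(\GZipmu)$: construct natural ring maps in both directions and verify they are mutually inverse. The plan proceeds in four steps.

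First, I would use the presentation $\GZipmu \simeq [G_{k}/E]$ from Proposition \ref{prop-gzip-presentation} and exploit the short exact sequence $1 \to U \times U^{-} \to E \to L \to 1$, where $U, U^{-}$ are the unipotent radicals of $P = P(\mu)$ and $P^{-} = P(-\mu)$. Since the kernel is unipotent, a Levi quotient induces $\R(L) \xrightarrow{\cong} \R(E)$, and the composite $\GZipmu \to BE \to BL$ yields a ring homomorphism $\phi : \R(L) \to \Kz(\GZipmu)$. There is also a natural morphism $\pi_{G} : \GZipmu \to BG$ recording the underlying $G$-torsor, coming from $[G_{k}/E] \to [G_{k}/(G \times G)] = BG$ via the identification $G_{k} \cong (G \times G)/\Delta G$.

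Second, I would extract the defining relation. The group $E$ admits two natural Levi projections $\pi_{\pm} : E \to L$, via the Levi quotients of $P$ and of $P^{-}$, related by $\pi_{+} = \varphi \circ \pi_{-}$. The pullback $\pi_{G}^{*}(V)$ for $V \in \R(G)$ can be computed through either projection, which forces $\phi(V|_{L}) = \phi(\varphi^{*}(V)|_{L})$ for every $V \in \R(G)$. Hence $\phi$ descends to a ring map $\Phi : \R(L)/I\R(L) \to \Kz(\GZipmu)$.

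For surjectivity of $\Phi$, I would use the zip stratification $G_{k} = \bigsqcup_{w} C_{w}$ by $E$-orbits. Each orbit is a homogeneous space $E/\mathrm{Stab}_{E}(g_{w})$, whose equivariant K-theory is controlled by the representation ring of the stabilizer; these stabilizers have reductive quotients that are Frobenius-twisted subgroups of $L$. Inductive application of the localization exact sequences for $\Kz^{E}$ along the strata then expresses every class in $\Kz^{E}(G_{k})$ in terms of representations of $L$.

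The hard part will be injectivity. The hypothesis that $G^{\der}$ is simply connected enters here through Steinberg's theorem: $\R(G)$ is a polynomial ring, and $\R(L)$ is a finite free module over $\R(G)$ of rank $|W_{G}/W_{L}|$. This freeness ensures $\R(L)/I\R(L)$ has a clean, computable presentation. I would establish injectivity either by constructing an explicit right inverse using the $\R(G)$-basis of $\R(L)$, or by comparing ranks via the Chern character with Brokemper's computation of $\A(\GZipmu)$ \cite{Brokemper}. Controlling the rank of $\Kz(\GZipmu)$---equivalently, ruling out relations beyond those generated by $I$---is the principal technical challenge.
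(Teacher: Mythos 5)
Your scaffolding (build $\Phi\colon \R(L)/I\R(L)\to \Kz(\GZipmu)$, then prove surjectivity and injectivity) is reasonable, and your first two steps are essentially correct: the paper likewise kills the unipotent part of $E$ to get $\Kz(\GZipmu)\simeq \KLG$ and works with the map $\R(L)\simeq \Kz^{L\times G}(G)\to \KLG$. But both of your substantive steps have real problems. For surjectivity, the zip stratification is the wrong tool integrally: the stabilizers of points in the $E$-orbits are extensions of \emph{finite} group schemes by unipotent groups, so the equivariant $\Kz$ of a stratum is the representation ring of a finite group scheme (roughly a form of $L^{\varphi}$), not a quotient of $\R(L)$; the restriction $\R(L)\to \R(L^{\varphi})$ is far from surjective, and the localization sequences only bound $\Kz^{E}(G_k)$ by these rings together with pushforwards from closed strata, which you would then have to identify with the image of $\R(L)$ --- this is not easier than the theorem. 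For injectivity, the Chern character comparison with Brokemper's $\A(\GZipmu)$ only sees rational information and cannot rule out torsion relations, so it cannot establish the integral statement; and the ``explicit right inverse'' is exactly the hard point you have left open.

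The paper's route avoids both issues and is worth contrasting with yours. It never touches the stratification. Instead it restricts from $L$ to a maximal torus $T$ and computes $\KTG$ completely by \emph{untwisting}: the $\varphi$-conjugation action of $T$ is realised as the restriction of the $(t_{1},t_{2})\cdot g=t_{1}gt_{2}^{-1}$ action of $T\times T$ along $t\mapsto(t,\varphi(t))$, so $\KTG$ is a quotient of $\Kz^{T\times T}(G)\simeq \Kz^{T}(G/B)\simeq \R(T)\otimes_{\R(G)}\R(T)$ by an explicit augmentation ideal; surjectivity of $\R(T)\to\KTG$ and the identification of the kernel with $I\R(T)$ are then elementary manipulations, with injectivity handled by writing down an explicit left inverse $f\otimes g\mapsto \varphi(g)f$. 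The hypothesis that $G^{\der}$ is simply connected enters twice, in a way your sketch does not exploit: it gives the equivariant Kunneth isomorphism $\Kz^{L}_{\cdot}\otimes_{\R(L)}\R(T)\simeq \Kz^{T}_{\cdot}$ and the faithful flatness of $\R(L)\hookrightarrow\R(T)$ (via $L^{\der}$ being simply connected), which together let one descend the exact sequence $0\to I\R(T)\to\R(T)\to\KTG\to 0$ back down to $0\to I\R(L)\to\R(L)\to\KLG\to 0$. If you want to salvage your approach, the piece to replace is your Steps 3 and 4 wholesale: reduce to $T$, untwist, and descend by faithful flatness.
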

	\begin{remark}
		This was known in the rational $\Kz$-case using methods from motivic cohomology \cite[5.4]{Yaylali}.
	\end{remark}
	There are some technical barriers to adapting this to the general case related to the fact that the representation theory is more complicated when $G^{\der}$ is not simply connected. In future work we hope the results of this paper can be extended in three directions: tackling the general case (when $G^{\der}$ not simply connected), saying something about the higher $K$-groups and studying Question \ref{q-geom-zip} in the context of $K$-theory.
	
	\subsection{Sketch proof of Theorem \ref{th-main}}
	\label{sec-intro-proof}
	   Firstly, $\Kz^{E}(G_{k}) \simeq \Kz^{L}(G_{k})$ where $L \coloneqq \Cent_{G}(\mu)$ since equivariant $K$-theory ignores the ‘unipotent part of the action’ and $L$ is the reductive quotient of $E$ (\ref{th-ignore-unip}).
    \begin{notation}
        Denote $\KLG \coloneqq \Kz^{L}(G_{k})$ to emphasise that this is not the standard action of $L$ on $G_{k}$, but rather an action by $\varphi$-conjugation $l\cdot g = lg\varphi(l)^{-1}$.
    \end{notation}
  For sake of exposition we often write $G$ instead of $G_{k}$; all schemes are considered over $k$.
	
	\vspace{3mm}
	
	The embedding $T \subset L$ of the maximal torus into the Levi subgroup induces a restriction map $r^{L}_{T}\colon \KLG \hookrightarrow \KTG$, which is injective. To compute $\KTG$ we ‘untwist’ the torus action. The embedding $T \hookrightarrow T\times T$ given by $t\mapsto(t,\varphi(t))$ induces a surjective restriction map $\Kz^{T\times T}(G) \rightarrow \KTG$. The equivariant Kunneth formula (\ref{prop-equiv-kunneth-form}) gives 
    $$\Kz^{T\times T}(G) \simeq \Kz^{T}(G/B) \simeq \Kz^{T}(*)\otimes_{\R(G)}\R(T)$$ The resulting map $\R(T) \cong \Kz^{T}(*) \rightarrow \KTG$ is surjective with kernel $I\R(T)$ (\ref{cor-kthy-tor-act}).
	
	\vspace{3mm}
	
	Given that $G^{\der}$ is simply connected we can use the equivariant Kunneth formula $\R(T)\otimes_{\R(L)}\KLG \simeq \KTG$ together with the fact that $\R(L)\hookrightarrow \R(T)$ is faithfully flat to deduce $\KLG$ from $\KTG$ (\ref{th-main-intext}).

	\subsection{Outline of the paper}
	The stack $\GZipmu$ is defined in Section \ref{sec-Gzips}. Background from equivariant algebraic $K$-theory is presented in Section \ref{sec-eq-kthy}. This consists of classical results due to Thomason \cite{Thomason} alongside results in \cite{Merkurjev} building on Thomason's work. The results regarding representation rings often predate this.  Section \ref{sec-untwist} computes the ring $\KTG$. This is used to deduce the ring $\KLG$ in Section \ref{sec-main-th-proof}.  Section \ref{sec-altproof-comp-invariants} presents an alternative approach to the results in Section \ref{sec-main-th-proof} using the description of the equivariant $K$-theory as the ring of Hecke invariants on the $K$-theory of the torus action, due to \cite{MegumiLandweberSjamaar}. 

 	\section*{Acknowledgements}
	
	We thank A. Merkurjev, R. Sjamaar, V. Uma and C. Yaylali for helpful conversations during the writing of this paper. Special thanks to M. Brion for a careful reading of earlier drafts.

	\section{The stack of \texorpdfstring{$G$}{G}-Zips of type \texorpdfstring{$\mu$}{mu}}
	\label{sec-Gzips}
	Recall that $p>0$ is a prime and $k$ is an algebraic closure of $\mathbb{F}_{p}$.
	\begin{notation}
		Denote $\sigma \colon k \rightarrow k$ the absolute Frobenius morphism and for any $k$-scheme $X$ write $X^{(p)} \coloneqq X \times_{\sigma}k$. Denote $\varphi \colon X \rightarrow X^{(p)}$ the relative Frobenius.
	\end{notation}
	\begin{definition}
		A cocharacter datum is a pair $(G,\mu)$ where $G$ is a reductive group over $\mathbb{F}_{p}$ and $\mu \colon \mathbb{G}_{m,k} \longrightarrow G_{k}$ is a cocharacter.
	\end{definition}  
	This datum determines parabolic subgroups $P^{-},P^{+} \subset G_{k}$, such that the Lie algebra of the parabolic $P^{-}$ (resp. $P^{+}$) is the sum of the non-positive (resp. non-negative) weight spaces for the action $\mu$ on $\Lie(G)$. Let $P \coloneqq P^{-}$ and $Q \coloneqq (P^{+})^{(p)}$ and  $L \coloneqq P^{-}\cap P^{+}$ and $M \coloneqq L^{(p)}$  be their Levi factors. 
	\begin{definition}[\cite{PWZ} 1.4, 3.2]
		Let $(G,\mu)$ be a cocharacter datum, $P$, $Q$ be as above and $S$ be a $k$-scheme. A $G$-zip of type $\mu$ over $S$ is a tuple $(I,I_{P},I_{Q},\iota)$ where $I$ is a right $G_{k}$-torsor over S, $I_{P}\subset I$ is a right $P$-torsor over $S$, $I_{Q}\subset I$ is a right $Q$-torsor over $S$ and $$\iota \colon I_{P}^{(p)}/\mathcal{R}_{u}(P)^{(p)} \longrightarrow I_{Q}/\mathcal{R}_{u}(Q)$$ is an isomorphism of $M$-torsors over $S$.
		The category of $G$-zips of type $\mu$ over $k$-schemes forms an algebraic stack $\GZipmu$.
	\end{definition}   Here $\mathcal{R}_{u}(P)$, $\mathcal{R}_{u}(Q)$ denote the unipotent radicals of $P$, $Q$.
	 There is a presentation of $\GZipmu$ as a quotient stack as follows. Write $\pi_{P}\colon P \rightarrow L$ and $\pi_{Q}\colon Q \rightarrow M$ for the projections to the Levi factors. Define the zip group $$E \coloneqq \{(x,y) \in P \times Q \mid \varphi(\pi_{P}(x)) = \pi_{Q}(y)\}\subset G_{k}\times G_{k}$$  The zip group sits in a split short exact sequence 
	\begin{equation}
	\label{tikzcd-ses-zipgroup}
	\begin{tikzcd}
	0 \arrow[r]& \mathcal{R}_{u}(P)\times\mathcal{R}_{u}(Q) \arrow[r]& E \arrow[r]& L\arrow[r] & 0
	\end{tikzcd}
	\end{equation}
	where the splitting $L \hookrightarrow E$ is given by $l \mapsto (l,\varphi(l))$. There is an action of $E$ on $G_{k}$ given by $(x,y)\cdot g = xgy^{-1}$.
	\begin{proposition}[\cite{PWZ} 3.11]
		\label{prop-gzip-presentation} There is an isomorphism
	 $\GZipmu \simeq [E\backslash G_{k}]$.
	\end{proposition}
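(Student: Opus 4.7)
The plan is to establish the equivalence by constructing a tautological $E$-equivariant $G$-zip on $G_{k}$ and invoking the universal property of the quotient stack. The two functors between the stacks will essentially be: descent of the tautological zip gives the map $[E \backslash G_{k}] \to \GZipmu$, and the ``framing space'' of a given zip gives the inverse.

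I would first construct the tautological $G$-zip $\mathcal{Z}^{\tau}$ over the base $G_{k}$: take $I^{\tau} := G_{k} \times G_{k}$ as the trivial right $G$-torsor (first projection), set $I_P^{\tau} := G_{k} \times P$ as the constant $P$-subtorsor, and let $I_Q^{\tau}$ be the $Q$-subtorsor whose fibre above $g \in G_{k}$ is the right coset $gQ$. The isomorphism $\iota^{\tau}$ is then defined using the canonical identification of $L^{(p)}$ with $M$ coming from Frobenius, together with the multiplication-by-$g$ map on the $Q$-side. The key check is $E$-equivariance: for $(x,y) \in E$, the base action $g \mapsto xgy^{-1}$ lifts to an isomorphism of zips by left multiplication by $x$ on $I_P^{\tau}$ and a $y$-twist on $I_Q^{\tau}$, and compatibility with $\iota^{\tau}$ is precisely the defining equation $\varphi(\pi_P(x)) = \pi_Q(y)$ of $E$. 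By equivariant descent, $\mathcal{Z}^{\tau}$ descends to a $G$-zip on $[E \backslash G_{k}]$, inducing a morphism $\Phi \colon [E \backslash G_{k}] \to \GZipmu$.

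For a quasi-inverse $\Psi$, given a $G$-zip $\mathcal{Z} = (I, I_P, I_Q, \iota)$ over $S$, I would define $T$ to be the $S$-scheme of pairs $(s_P, s_Q) \in I_P \times_S I_Q$ satisfying the $\iota$-compatibility $\iota([s_P^{(p)}]) = [s_Q]$. Locally on $S$ such sections exist, and any two differ by a unique element of $E$ acting on $(s_P, s_Q)$ by right multiplication, so $T$ is an $E$-torsor over $S$. Since $s_P$ and $s_Q$ both lie in $I$, there is a unique section $g \colon T \to G$ with $s_P \cdot g = s_Q$, and this gives the required $E$-equivariant map $T \to G_{k}$.

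The main obstacle is the bookkeeping: verifying that $T$ is genuinely an $E$-torsor and that $\Phi$ and $\Psi$ are mutually quasi-inverse. Both statements reduce to the identification of $E$ with the automorphism group of the tautological zip at $e \in G_{k}$ — essentially the definition of $E$ as the fibre product of the Levi projection maps from $P$ and $Q$ — combined with faithfully flat descent for $G$-zips. One could alternatively streamline the equivariance verification by first descending through the splitting $L \hookrightarrow E$ of (\ref{tikzcd-ses-zipgroup}) to the intermediate quotient $[L \backslash G_{k}]$ and then handling the unipotent radicals $\mathcal{R}_u(P)$ and $\mathcal{R}_u(Q)$ separately.
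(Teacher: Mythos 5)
Your sketch is correct and is essentially the argument of \cite[\S 3]{PWZ} itself: the tautological (``standard'') zip over $G_{k}$ with fibre $(G,P,gQ,\iota_{g})$ at $g$, the identification of $E$ with the isomorphisms $Z_{g}\to Z_{xgy^{-1}}$ via the condition $\varphi(\pi_{P}(x))=\pi_{Q}(y)$, and the framing torsor of compatible trivialisations as quasi-inverse. The paper does not reprove this statement but simply cites [PWZ, 3.11], so there is nothing to compare beyond noting that your reconstruction matches the cited proof.
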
 This implies that $\GZipmu$ is a smooth algebraic stack over $k$ of dimension $0$. The aim of the paper is to compute $\Kz(\GZipmu) = \Kz^{E}(G_{k})$.
	
 \section{Simply-connectedness assumption}
	Let $G$ be a reductive group over an algebraically closed field $k$. Given a maximal torus $T \subset G$ we obtain a root datum $(X^{*}(T),\Phi,X_{*}(T),\Phi^{\vee})$. A choice of Borel subgroup $T \subset B \subset G$ is equivalent to a choice of simple roots $\Delta \subset \Phi$. The fundamental weights $\eta_{\alpha} \in X^{*}(T)_{\mathbb{Q}}$ are defined to be dual to the simple coroots $\alpha^{\vee} \in \Phi^{\vee}$ under the natural pairing $X^{*}(T)_{\mathbb{Q}}\otimes X_{*}(T)_{\mathbb{Q}} \rightarrow \mathbb{Q}$. That is, $(\eta_{\alpha},\beta^{\vee}) = \delta_{\alpha,\beta}$.
    \begin{definition}
         We say a connected semisimple group $G$ is simply connected if for some (equivalently every) maximal torus $T\subset G$ and choice of simple roots $\Delta \subset \Phi$, the fundamental weights lie in the character group lattice: $\{\eta_{\alpha}\} \subset X^{*}(T)$.
    \end{definition}
    \begin{lemma}
    \label{le-simply-connected-Levi}
        Let $G$ be a simply connected semisimple group over an algebraically closed field and $L\subset G$ a Levi subgroup. Then $L^{\der}$ is simply connected.
    \end{lemma}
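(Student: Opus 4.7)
The plan is to reduce the statement to an explicit comparison of fundamental weights of $G$ and of $L^{\der}$, and then exploit the fact that the former lie in the character lattice by hypothesis. Concretely, after choosing a maximal torus $T\subset L\subset G$ (possible because $L$ is the centralizer of its central torus), $T$ is also a maximal torus of $L$, and $T':=T\cap L^{\der}$ is a maximal torus of $L^{\der}$. Choosing a Borel $B\supset T$ in $G$ compatible with $L$ singles out simple roots $\Delta\subset\Phi$, and the roots of $L$ form a parabolic subsystem $\Phi_L\subset\Phi$ with simple roots $\Delta_L:=\Phi_L\cap\Delta$. Under restriction $X^{*}(T)\to X^{*}(T')$, the simple coroots $\alpha^{\vee}$ for $\alpha\in\Delta_L$ are exactly the simple coroots of $L^{\der}$ with respect to $\Delta_L$ (a standard fact about Levi subgroups).

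The core of the argument is the claim that, for each $\alpha\in\Delta_L$, the restriction to $T'$ of the fundamental weight $\eta_{\alpha}^{G}\in X^{*}(T)_{\mathbb{Q}}$ of $G$ equals the fundamental weight $\eta_{\alpha}^{L}\in X^{*}(T')_{\mathbb{Q}}$ of $L^{\der}$. To see this, note that for $\beta\in\Delta_L$ the pairing is compatible with restriction:
\[
\langle\eta_{\alpha}^{G}|_{T'},\beta^{\vee}\rangle_{T'}=\langle\eta_{\alpha}^{G},\beta^{\vee}\rangle_{T}=\delta_{\alpha,\beta},
\]
so $\eta_{\alpha}^{G}|_{T'}$ satisfies the defining identity for $\eta_{\alpha}^{L}$. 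Since $L^{\der}$ is semisimple, the simple coroots $\{\beta^{\vee}:\beta\in\Delta_L\}$ span $X_{*}(T')_{\mathbb{Q}}$, so the defining pairing uniquely determines an element of $X^{*}(T')_{\mathbb{Q}}$; hence $\eta_{\alpha}^{G}|_{T'}=\eta_{\alpha}^{L}$.

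The conclusion is then immediate: simple connectedness of $G$ gives $\eta_{\alpha}^{G}\in X^{*}(T)$ for every $\alpha\in\Delta$, so restriction to $T'$ places each $\eta_{\alpha}^{L}$ in $X^{*}(T')$, which is the definition of $L^{\der}$ being simply connected.

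The step needing the most care is the uniqueness argument used to identify $\eta_{\alpha}^{G}|_{T'}$ with $\eta_{\alpha}^{L}$: one must verify that the simple coroots of $L^{\der}$ span $X_{*}(T')_{\mathbb{Q}}$ (equivalently that $T'$ really is a maximal torus of a semisimple group whose root system has simple roots $\Delta_L$), so that the dual-basis characterization of fundamental weights is unambiguous. Everything else — existence of a compatible choice of $T$, $B$ and $\Delta_L$, and the identification of simple coroots of $L^{\der}$ inside those of $G$ — is standard structure theory for Levi subgroups.
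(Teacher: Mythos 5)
Your proof is correct, and it is well matched to the paper's chosen definition of simple connectedness (fundamental weights lying in the character lattice). The paper itself gives no argument here --- it simply cites Malle--Testerman, Prop.\ 12.14, whose proof is essentially the dual of yours: it uses the characterization that a semisimple group is simply connected iff $X_{*}(T)$ equals the coroot lattice $\mathbb{Z}\Phi^{\vee}$, and then computes $X_{*}(T')=X_{*}(T)\cap\mathbb{Q}\Phi_{L}^{\vee}=\mathbb{Z}\Phi^{\vee}\cap\mathbb{Q}\Phi_{L}^{\vee}=\mathbb{Z}\Phi_{L}^{\vee}$, the last equality because the simple coroots of $L$ are part of a $\mathbb{Z}$-basis of $\mathbb{Z}\Phi^{\vee}$. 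Your version instead restricts fundamental weights, which requires the observation (which you correctly make and justify) that the coroots $\beta^{\vee}$ for $\beta\in\Delta_{L}$ factor through $T'=T\cap L^{\der}$, so that $\langle\eta_{\alpha}^{G}|_{T'},\beta^{\vee}\rangle=\delta_{\alpha,\beta}$ and the restriction is forced to be $\eta_{\alpha}^{L}$ by the spanning of $X_{*}(T')_{\mathbb{Q}}$ by the simple coroots of the semisimple group $L^{\der}$. The two supporting facts you invoke --- that $T\cap L^{\der}$ is a maximal torus of $L^{\der}$ and that $\Delta_{L}=\Phi_{L}\cap\Delta$ is a system of simple roots for $L$ whose coroots are the simple coroots of $L^{\der}$ --- are indeed standard, and you flag the one place (spanning of $X_{*}(T')_{\mathbb{Q}}$) where the uniqueness argument could otherwise fail. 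No gaps.
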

    \begin{proof}
         This is a standard result. A proof can be found in \cite[12.14]{Malle-Testerman} 
    \end{proof}
    \begin{corollary}
    \label{cor-simply-conn-Levi}
        Let $G$ be a reductive group over an algebraically closed field and $L \subset G$ a Levi subgroup. If $G^{\der}$ is simply connected then $L^{\der}$ is simply connected.
    \end{corollary}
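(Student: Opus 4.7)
The plan is to reduce the reductive case to the semisimple case already handled by Lemma \ref{le-simply-connected-Levi}. Two facts are needed: (a) $L \cap G^{\der}$ is a Levi subgroup of $G^{\der}$, and (b) $(L \cap G^{\der})^{\der} = L^{\der}$. Once these are in place, we can apply Lemma \ref{le-simply-connected-Levi} to the simply connected semisimple group $G^{\der}$ and its Levi subgroup $L \cap G^{\der}$, concluding that $(L\cap G^{\der})^{\der} = L^{\der}$ is simply connected.

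For step (a), I would pick a parabolic $P \subset G$ whose Levi factor is $L$ and consider $P\cap G^{\der}$. Because $G = G^{\der}\cdot Z(G)^{0}$ and $Z(G)^{0} \subset P$, the inclusion $G^{\der}/(P\cap G^{\der}) \hookrightarrow G/P$ is a bijective map onto the projective variety $G/P$, so $P \cap G^{\der}$ is a parabolic of $G^{\der}$. Its Levi decomposition comes by intersecting the Levi decomposition of $P$ with $G^{\der}$: the unipotent radical $\mathcal{R}_u(P)$ already lies in $G^{\der}$, and a Levi factor of $P\cap G^{\der}$ is $L\cap G^{\der}$.

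For step (b), I would use $G = G^{\der}\cdot Z(G)^0$ together with $Z(G)^0 \subset L$ to deduce $L = (L\cap G^{\der})\cdot Z(G)^0$. Since $Z(G)^0$ is central, any commutator $[a,b]$ of elements $a=a_1 z_1,\ b=b_1 z_2$ of $L$ (with $a_1,b_1 \in L\cap G^{\der}$ and $z_i \in Z(G)^0$) reduces to $[a_1,b_1] \in (L\cap G^{\der})^{\der}$; this gives $L^{\der}\subset (L\cap G^{\der})^{\der}$, and the reverse inclusion is immediate. I expect step (a) to be the part requiring the most care, but both are standard facts about the behaviour of parabolic/Levi subgroups under passage to the derived group, so no serious obstacle is anticipated.
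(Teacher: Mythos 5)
Your proposal is correct and follows exactly the same route as the paper: the paper's proof simply asserts that $L\cap G^{\der}$ is a Levi subgroup of $G^{\der}$ with $(L\cap G^{\der})^{\der}=L^{\der}$ and invokes Lemma \ref{le-simply-connected-Levi}. You have merely supplied the (standard and correct) justifications for those two facts, which the paper leaves implicit.
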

    \begin{proof}
        $L\cap G^{\der} \subset G^{\der}$ is a Levi subgroup and $(L\cap G^{\der})^{\der} = L^{\der}$ is simply connected by Lemma \ref{le-simply-connected-Levi}.
    \end{proof}
 
 \section{General results from equivariant algebraic \texorpdfstring{$K$}{}-theory}
	\label{sec-eq-kthy}
	This section recalls some general results from equivariant $K$-theory which are required for the remainder of the paper.
	\begin{notation}
		By a $G$-variety we mean a smooth $k$-variety $X$ with an algebraic group action of $G$ on $X$. Given $X$ a $G$-variety, denote $\Kn^{G}(X) \coloneqq \Kn(\Vect^{G}(X))$ the $n$-th $K$-theory group of the category of $G$-vector bundles on $X$ as in \cite{Thomason}. Given the trivial $G$-action on the point we obtain an equivalence of categories $\Vect^{G}(*) \simeq \Rep(G)$, inducing an isomorphism 
    $\Kz^{G}(*) \simeq \R(G)\coloneqq \Kz(\Rep(G))$. There is an $\R(G)$-module structure on $\Kn^{G}(X)$. If $H \rightarrow G$ is a homomorphism of algebraic groups then we write $r^{G}_{H}\colon \Kn^{G}(X)\rightarrow \Kn^{H}(X)$ for the restriction map. It is compatible with the restriction $\R(G)\rightarrow \R(H)$. There is an $\R(G)$-algbera structure on $\Kz^{G}(X)$, with multiplication given by tensor product of vector bundles. Given the rank homomorphism $\epsilon\colon \R(G) \rightarrow \mathbb{Z}$, the augmentation ideal is defined by $J(G) \coloneqq \ker(\epsilon)$. 
	\end{notation}
    
	\subsection{Ignore unipotent action}
	The following result says that the unipotent part of the group action plays no role in the equivariant $K$-theory ring. 
	\begin{theorem}
		\label{th-ignore-unip}
		Let $k$ be an algebraically closed field, $G$ an arbitrary linear algebraic $k$-group (not connected reductive as in the rest of the paper) and $X$ be a $G$-variety. Let \[\begin{tikzcd}
		1 \arrow[r]& K \arrow[r]& G \arrow[r]& H \arrow[r]& 1
		\end{tikzcd}\] be a split short exact sequence of $k$-groups with $H$ reductive and $K$ unipotent. For all $n\geq0$ the restriction map $r^{G}_{H}\colon\Kn^{G}(X)\rightarrow \Kn^{H}(X)$ is an isomorphism. For $n=0$ this is an isomorphism of rings.
	\end{theorem}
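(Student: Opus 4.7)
The plan is to prove the theorem by induction on $\dim K$, reducing to the base case $K \cong \mathbb{G}_a$, and then appealing to Thomason's equivariant homotopy invariance together with the standard induction (Morita) equivalence.

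First I would perform a dévissage on $K$. Since $K$ is smooth unipotent, it admits a finite filtration by characteristic subgroups, obtained by combining the derived and lower central series with further filtrations of the abelian quotients (by Frobenius kernels or Verschiebung in positive characteristic), whose successive subquotients are isomorphic to $\mathbb{G}_a$. Characteristic subgroups of $K$ are preserved by the $H$-action through the splitting and are normal in $G$ since $K$ is, giving a chain of $G$-stable normal subgroups $K = K_0 \supset K_1 \supset \cdots \supset K_r = 1$ with each $K_i / K_{i+1} \cong \mathbb{G}_a$ as an $H$-variety. Each step produces a split short exact sequence of the required form with strictly smaller unipotent kernel, so induction on $\dim K$ combined with composition of restriction maps reduces the problem to the single base case $K \cong \mathbb{G}_a$.

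For the base case, $G = \mathbb{G}_a \rtimes_\chi H$ where $H$ acts on $\mathbb{G}_a$ through some character $\chi\colon H \to \mathbb{G}_m$ (possibly trivial). Consider the action map
\[
\mu\colon \mathbb{G}_a \times X \longrightarrow X, \qquad (t, x) \longmapsto t \cdot x,
\]
which is $G$-equivariant when $G$ acts on the source by $(t_0, h_0) \cdot (t, x) = \bigl(t_0 + \chi(h_0) t,\, h_0 x\bigr)$. A direct computation shows that $\mu$ exhibits its source as a $G$-equivariant torsor under the $G$-equivariant line bundle on $X$ corresponding to $\chi$. Thomason's equivariant homotopy invariance for torsors under equivariant vector bundles then gives that $\mu^*\colon \Kn^G(X) \to \Kn^G(\mathbb{G}_a \times X)$ is an isomorphism for every $n \geq 0$, and a ring isomorphism when $n = 0$. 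Identifying $\mathbb{G}_a \times X$ with $G \times^H X$ via the splitting and applying the induction equivalence $\Kn^G(G \times^H X) \simeq \Kn^H(X)$ produces the desired isomorphism, which one then checks coincides with the restriction map $r^G_H$.

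The principal technical point I would expect to spend most care on is the final compatibility check: verifying that the composite $\Kn^G(X) \xrightarrow{\mu^*} \Kn^G(\mathbb{G}_a \times X) \xrightarrow{\sim} \Kn^H(X)$ really recovers $r^G_H$, rather than some twist of it. For a $G$-equivariant vector bundle $W$ on $X$, this amounts to comparing the $G$-structures on $\mu^* W$ and on the induced bundle $G \times^H (r^G_H W)$; they agree via the $\mathbb{G}_a$-action on $W$ along the fibres of $\mu$, but writing this out cleanly requires careful bookkeeping. A subsidiary concern is ensuring that the dévissage can always be arranged to have $\mathbb{G}_a$ subquotients rather than more exotic commutative unipotent ones in positive characteristic, which is standard but must be handled with care.
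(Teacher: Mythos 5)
The paper does not actually prove this statement: its proof is a citation to Thomason and to \cite[5.2.18]{ChrissGinzburg}. Your proposal reconstructs the standard underlying argument (dévissage of the unipotent kernel, equivariant homotopy invariance, and the induction equivalence $\Kn^{G}(X\times G/H)\simeq \Kn^{H}(X)$), which is the right skeleton. One remark on the last step: if you set it up with the projection $X\times G/H\to X$ (diagonal $G$-action on the source) rather than with the action map, the compatibility with $r^{G}_{H}$ that worries you becomes immediate, since $\{eH\}\times X\hookrightarrow G/H\times X\to X$ is the identity.

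There is, however, a genuine gap in the dévissage as you state it: you cannot in general filter $K$ by $G$-stable (let alone characteristic) subgroups with successive quotients isomorphic to $\mathbb{G}_{a}$. Take $H=\SL_{2}$, $K=\mathbb{G}_{a}^{2}$ the standard representation viewed as a vector group, and $G=K\rtimes H$. A $G$-stable smooth connected one-dimensional subgroup $N\subset K$ would have $\Lie(N)$ an $\SL_{2}$-stable line in the irreducible two-dimensional representation, which does not exist; so your induction never reaches the base case $K\cong\mathbb{G}_{a}$. The correct stopping point is a vector group $V$ on which $H$ acts \emph{linearly}: then $X\times V\to X$ is a torsor under the $G$-equivariant vector bundle $X\times V$, Thomason's homotopy invariance applies directly, and no reduction to dimension one is possible or needed. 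So the filtration you want has vector-group subquotients with linear $H$-action (obtained from the descending central series together with the $p$-power filtration of the commutative quotients over a perfect field), and the residual technical point in characteristic $p$ is linearizing the induced $H$-action on these subquotients, since $\mathrm{Aut}_{\rm gp}(\mathbb{G}_{a}^{n})$ is strictly larger than $\GL_{n}$. Your base case $\mathbb{G}_{a}\rtimes_{\chi}H$ is correct as far as it goes (over a field, $\mathrm{Aut}_{\rm gp}(\mathbb{G}_{a})=\mathbb{G}_{m}$, so the action is indeed through a character), but it is not the case to which the general situation reduces.
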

	\begin{proof}
		The theorem follows from foundational results of Thomason \cite{Thomason}. A proof appears in \cite[5.2.18]{ChrissGinzburg}.  
	\end{proof}
	
	\subsection{Representation rings}
	\label{subsec-weyl-action}
	Let $G$ be a connected reductive group and $T \subset G$ a maximal torus (here we can work over an arbitrary algebraically closed field). We recall some classical results regarding representation rings and Weyl group actions. These will allow us to reduce to the torus action. The Weyl group is $W = W(G,T) \coloneqq N_{G}(T)/T$. There is an action of $W$ on $X^{*}(T)$ by $(nT\cdot \chi)(s) = \chi(nsn^{-1})$. This extends to an action of $W$ on $\R(T)$. 
	
	\begin{lemma}[\cite{Tits}]
		\label{le-weyl-action-repring}
		The restriction map $\R(G)\hookrightarrow \R(T)$ induces an isomorphism $\R(G) \simeq \R(T)^{W}$. 
	\end{lemma}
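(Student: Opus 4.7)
The plan is to prove the map is an isomorphism in three steps: (i) injectivity of $\R(G) \hookrightarrow \R(T)$, (ii) that the image lies in $\R(T)^{W}$, and (iii) surjectivity onto the Weyl invariants, all via classical character theory and highest weight theory.

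For (i), a finite-dimensional $G$-representation is determined by its character, which is a class function on $G$. Since every semisimple element of $G$ is $G$-conjugate to an element of $T$, class functions on $G$ are determined by their restriction to $T$, giving injectivity. For (ii), given $V \in \Rep(G)$ and $n \in N_{G}(T)$, the action of $n$ on $V$ is a $T$-equivariant isomorphism between $V$ and its twist by $\mathrm{Ad}(n)$; hence the $T$-weight multiplicities of $V$ are constant on $W$-orbits, placing the restricted character in $\R(T)^{W}$.

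The main step is (iii). Fix a Borel $B \supset T$ and the corresponding dominance order on $X^{*}(T)$. A $\mathbb{Z}$-basis of $\R(T)^{W} = \mathbb{Z}[X^{*}(T)]^{W}$ is given by the $W$-orbit sums $m_{\lambda} = \sum_{\mu \in W\lambda} e^{\mu}$ indexed by the dominant weights $\lambda \in X^{*}(T)^{+}$. By the highest weight classification for connected reductive groups, the isomorphism classes of irreducible $G$-modules are parametrised by the same set $X^{*}(T)^{+}$, and each $V(\lambda)$ has restricted $T$-character of the form $m_{\lambda} + \sum_{\nu < \lambda} c_{\nu} m_{\nu}$, with the sum running over dominant weights strictly lower than $\lambda$ in the dominance order. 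The transition matrix from $\{[V(\lambda)]\}$ to $\{m_{\lambda}\}$ is therefore unitriangular, so $\{[V(\lambda)]\}$ is itself a $\mathbb{Z}$-basis of $\R(T)^{W}$, yielding surjectivity.

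The main obstacle is the reliance in step (iii) on the highest weight parametrisation for a general connected reductive group, as opposed to a semisimple simply connected one: one must check that the irreducibles of $G$ are indexed by exactly the dominant characters of $T$, so that both bases are indexed by the same set $X^{*}(T)^{+}$. This is classical but must be invoked in the correct generality. Once in place, the unitriangularity of the character decomposition makes the remaining argument purely formal.
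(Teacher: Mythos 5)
The paper does not prove this lemma at all; it simply cites \cite{Tits} and moves on, so there is no in-text argument to compare against. Your proof is the standard one (going back to Chevalley) and it is essentially correct: the surjectivity argument via the unitriangular transition matrix between the orbit sums $m_{\lambda}$ and the restricted characters of the irreducibles $L(\lambda)$, indexed by the common set $X^{*}(T)^{+}$, is exactly the right mechanism, and it is valid for connected reductive (not just semisimple simply connected) groups in any characteristic, which is the generality the paper needs since it works over $k = \overline{\mathbb{F}}_{p}$.

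One small caveat on step (i): over a field of positive characteristic a representation is \emph{not} determined by its trace class function on $G(k)$ (only its class in $\Kz(\Rep(G))$, i.e.\ its composition factors, is), so the phrase ``determined by its character'' should be read at the level of formal characters in $\R(T) = \mathbb{Z}[X^{*}(T)]$ rather than trace functions. This is harmless here because injectivity of $\R(G) \to \R(T)$ is in fact an immediate consequence of your step (iii): the unitriangularity shows the restricted characters $\{\mathrm{ch}\, L(\lambda)\}$ are $\mathbb{Z}$-linearly independent in $\R(T)$, and these classes form a $\mathbb{Z}$-basis of $\R(G)$. So step (i) can be deleted or folded into (iii), and the argument stands.
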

	Under the additional assumption that $G^{\der}$ is simply connected there exists a basis for $\R(T)$ over the subring $\R(G)$. First, we note that this assumption is equivalent to $\pi_{1}(G)$ being torsion-free.
	\begin{lemma}[\cite{Merkurjev} 1.7]
		Let $G$ be a reductive group. The derived group $G^{\der}$ is simply connected if and only if $\pi_{1}(G)$ is torsion-free.
	\end{lemma}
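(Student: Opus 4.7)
The plan is to use the algebraic description $\pi_{1}(G) = X_{*}(T)/Q^{\vee}$, where $Q^{\vee} = \mathbb{Z}\Phi^{\vee}$ is the coroot lattice inside the cocharacter lattice of a maximal torus $T\subset G$. With this identification the statement becomes a purely lattice-theoretic assertion that I would prove by comparing the root data of $G$, $G^{\der}$, and the abelianization $G/G^{\der}$.

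First I would set $T^{\der}$ to be a maximal torus of $G^{\der}$ contained in $T$, so that there is a short exact sequence of tori $1 \to T^{\der} \to T \to T^{ab} \to 1$ with $T^{ab} := T/T^{\der} \cong G/G^{\der}$. Taking cocharacter lattices gives
\[
0 \to X_{*}(T^{\der}) \to X_{*}(T) \to X_{*}(T^{ab}) \to 0.
\]
Since each coroot $\alpha^{\vee}$ of $G$ factors through $G^{\der}$, the coroot lattice of $G$ coincides with that of $G^{\der}$ and lies inside $X_{*}(T^{\der})$. Quotienting the previous sequence by $Q^{\vee}$ yields
\[
0 \to \pi_{1}(G^{\der}) \to \pi_{1}(G) \to X_{*}(T^{ab}) \to 0.
\]

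The right-hand term is the cocharacter lattice of a torus, hence free abelian, while $\pi_{1}(G^{\der}) = X_{*}(T^{\der})/Q^{\vee}(G^{\der})$ is finite since both lattices have rank $\dim T^{\der}$. Therefore the torsion subgroup of $\pi_{1}(G)$ is exactly $\pi_{1}(G^{\der})$, so $\pi_{1}(G)$ is torsion-free if and only if $\pi_{1}(G^{\der}) = 0$. The remaining input is the well-known fact that for a semisimple group $H$, vanishing of $\pi_{1}(H)$ is equivalent to the fundamental weights $\{\eta_{\alpha}\}$ lying in $X^{*}(T_{H})$: under the perfect pairing between $X^{*}(T_{H})$ and $X_{*}(T_{H})$ the dual of $Q^{\vee}(H)$ is the weight lattice $\mathbb{Z}\{\eta_{\alpha}\}$, so $X_{*}(T_{H}) = Q^{\vee}(H)$ dualises to $\{\eta_{\alpha}\} \subset X^{*}(T_{H})$, which is the paper's definition of simply connected.

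The main subtlety is pinning down the definition of $\pi_{1}(G)$; once it is taken to be the algebraic fundamental group $X_{*}(T)/Q^{\vee}$, the remainder of the argument is a clean diagram chase. The crucial quantitative input is the finiteness of $\pi_{1}(G^{\der})$, which relies on $G^{\der}$ being semisimple so that $X_{*}(T^{\der})$ and $Q^{\vee}(G^{\der})$ share a common rank; this is what forces the torsion of $\pi_{1}(G)$ to collapse onto the derived part rather than leaking into $X_{*}(T^{ab})$.
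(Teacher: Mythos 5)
Your argument is correct and complete. The paper itself offers no proof of this lemma -- it is quoted directly from Merkurjev's paper (his Lemma 1.7) -- so there is nothing in the text to compare against; your write-up is exactly the standard argument one finds in the literature. The key points all check out: $T^{\der} = T \cap G^{\der}$ is a maximal torus of $G^{\der}$, so the sequence of tori $1 \to T^{\der} \to T \to T/T^{\der} \to 1$ gives a short exact (indeed split, since character lattices of tori are free) sequence of cocharacter lattices; the coroots of $G$ restrict to those of $G^{\der}$ and land in $X_{*}(T^{\der})$, so quotienting by $Q^{\vee}$ stays exact; $\pi_{1}(G^{\der})$ is finite because $Q^{\vee}$ and $X_{*}(T^{\der})$ have the same rank for a semisimple group, while $X_{*}(T/T^{\der})$ is free, which pins the torsion of $\pi_{1}(G)$ to $\pi_{1}(G^{\der})$. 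The final translation -- $X_{*}(T^{\der}) = Q^{\vee}$ if and only if the fundamental weights lie in $X^{*}(T^{\der})$ -- is the correct dualisation and matches the definition of simple connectedness used in the paper. The one hypothesis you are implicitly using and should perhaps flag is that $\pi_{1}(G)$ is taken to be the algebraic (Borovoi) fundamental group $X_{*}(T)/Q^{\vee}$, which is indeed the definition in force in Merkurjev's paper.
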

	\begin{lemma}[\cite{Steinberg} 1.3]
		\label{le-steinberg-pittie}
		Suppose that $G^{\der}$ is simply connected. The ring $\R(T)$ is a free $\R(G)$-module of rank $\rkWeyl$. 
	\end{lemma}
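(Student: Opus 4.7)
The plan is to prove this classical Pittie--Steinberg result by reducing to the semisimple simply connected case and then carrying out Steinberg's explicit basis construction indexed by the Weyl group.

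For the reduction, write $G$ as an almost-direct product $Z \cdot G^{\der}$ where $Z := Z(G)^{0}$ is the connected center, so that the maximal torus decomposes as $T = Z \cdot T^{\der}$ with $T^{\der} := T \cap G^{\der}$, and the Weyl groups coincide: $W(G,T) = W(G^{\der}, T^{\der})$. The isogeny $Z \times G^{\der} \to G$ identifies $\R(G)$ and $\R(T)$ with subrings of $\R(Z) \otimes \R(G^{\der})$ and $\R(Z) \otimes \R(T^{\der})$ respectively (invariants for the finite central kernel); since $\R(Z)$ is free over itself, freeness of $\R(T^{\der})$ over $\R(G^{\der})$ of rank $\rkWeyl$ implies the desired conclusion. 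So one may assume $G$ is semisimple and simply connected.

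In the semisimple simply connected case, fix $T \subset B \subset G$ with simple roots $\Delta = \{\alpha_{1}, \ldots, \alpha_{n}\}$. The hypothesis guarantees the fundamental weights $\eta_{1}, \ldots, \eta_{n}$ actually lie in $X^{*}(T)$. Following Steinberg, for each $w \in W$ one associates a character $\lambda_{w} \in X^{*}(T)$ built from the fundamental weights indexed by the inversion set $\{i : w^{-1}(\alpha_{i}) < 0\}$; the candidate $\R(G)$-basis of $\R(T)$ is then $\{e^{\lambda_{w}}\}_{w \in W}$. Generation over $\R(G) = \R(T)^{W}$ (Lemma \ref{le-weyl-action-repring}) is obtained by a triangularity argument in the dominance order: one shows that after subtracting an $\R(T)^{W}$-combination of the $e^{\lambda_{w}}$, an arbitrary $e^{\chi}$ is reduced to terms with strictly smaller dominant conjugate, so the procedure terminates.

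The main obstacle is linear independence over $\R(G)$. The standard Pittie--Steinberg technique is to establish it via a Jacobian determinant: compute the Jacobian of the $e^{\lambda_{w}}$ with respect to coordinates on $T$ and show it equals the Weyl denominator $\prod_{\alpha > 0}(1 - e^{-\alpha})$ up to a unit, hence is a nonzero-divisor in $\R(T)$. Base changing to the fraction field of $\R(T)$ then forces the $e^{\lambda_{w}}$ to be linearly independent over $\R(G)$, and freeness of rank $\rkWeyl$ follows. The non-vanishing of this Jacobian is precisely where the simply-connected hypothesis is essential: without the fundamental weights lying in $X^{*}(T)$, the candidate basis elements would not even be well-defined as elements of $\R(T)$, and the Jacobian would degenerate.
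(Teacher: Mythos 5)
The paper offers no proof of this lemma (it is quoted from Steinberg), so your sketch has to stand on its own. Its overall strategy is the right one and is indeed Steinberg's: an explicit candidate basis $\{e^{\lambda_w}\}_{w\in W}$ built from fundamental weights, generation by descending induction in the dominance order, and linear independence from the fact that $\det\bigl(e^{v(\lambda_w)}\bigr)_{v,w\in W}$ is a unit multiple of the Weyl denominator $\prod_{\alpha>0}(1-e^{-\alpha})$, hence nonzero in the integral domain $\R(T)$, so that the system $\sum_w c_w e^{v(\lambda_w)}=0$ ($v\in W$) forces $c_w=0$. (Calling this determinant a Jacobian is a harmless misnomer: the relevant matrix consists of the Weyl translates of the candidate basis, not of derivatives, but the role it plays is exactly as you describe.)

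The genuine gap is the opening reduction to the semisimple simply connected case. From the isogeny $Z\times G^{\der}\to G$ with kernel $\mu=Z\cap G^{\der}$ you only get that $\R(G)$ and $\R(T)$ are the degree-zero components of the $X^{*}(\mu)$-gradings on $\R(Z)\otimes\R(G^{\der})$ and $\R(Z)\otimes\R(T^{\der})$; freeness of one ambient graded ring over the other does not formally restrict to freeness of the degree-zero parts, so ``since $\R(Z)$ is free over itself \ldots implies the desired conclusion'' is a non sequitur (already for $G=\GL_{2}$ one has $\R(G)\neq\R(Z)\otimes\R(G^{\der})$, and at best such an argument yields a projective module, not a free one). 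Worse, the elements $e^{\lambda_w}$ built from fundamental weights of $G^{\der}$ are characters of $T\cap G^{\der}$ but typically not of $T$, so they do not even lie in $\R(T)$. The reduction can be repaired: one checks that each $X^{*}(\mu)$-graded piece of $\R(Z)\otimes\R(G^{\der})$ is free of rank one over $\R(G)$ by lifting characters of $\mu$ to $X^{*}(Z)$, and then twists each $e^{\lambda_w}$ by a matching character of $Z$ so that the product lands in degree zero. But that repair is precisely the content of extending the fundamental weights from $X^{*}(T\cap G^{\der})$ to $X^{*}(T)$ (possible because restriction of characters to a subtorus is surjective), which is how Steinberg's Theorem 1.2 --- the statement actually cited --- handles the reductive case directly: define the $\lambda_w$ from such chosen extensions and run your remaining steps on $G$ itself, with no reduction to the semisimple case at all. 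I would recommend restructuring your argument that way.
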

	\begin{corollary}
		\label{cor-repring-fflat}
		Suppose that $G^{\der}$ is simply connected. $\R(G) \hookrightarrow \R(T)$ is faithfully flat. 
	\end{corollary}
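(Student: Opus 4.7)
The plan is to deduce this directly from Lemma \ref{le-steinberg-pittie}. That lemma asserts that, under the simply-connectedness hypothesis, $\R(T)$ is a free $\R(G)$-module of rank $\rkWeyl$. Since $\rkWeyl \geq 1$ (the Weyl group is nonempty), $\R(T)$ is a free $\R(G)$-module of strictly positive rank.

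The main observation is then that any free module of strictly positive rank is faithfully flat. Flatness is immediate since free modules are flat. For faithful flatness, it suffices to check that for every maximal ideal $\mathfrak{m} \subset \R(G)$, the fibre $\R(T)/\mathfrak{m}\R(T)$ is nonzero; but if $\R(T) \cong \R(G)^{\rkWeyl}$ as $\R(G)$-modules, then $\R(T)/\mathfrak{m}\R(T) \cong (\R(G)/\mathfrak{m})^{\rkWeyl}$, which is nonzero as $\rkWeyl \geq 1$. Equivalently, for any nonzero $\R(G)$-module $N$, one has $\R(T)\otimes_{\R(G)} N \cong N^{\rkWeyl} \neq 0$.

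The injectivity of $\R(G) \hookrightarrow \R(T)$ is already given by Lemma \ref{le-weyl-action-repring} (it identifies $\R(G)$ with $\R(T)^{W}$). So nothing remains to verify. There is no real obstacle here: the statement is a formal consequence of the Steinberg--Pittie freeness result quoted just above, together with the elementary fact that a finitely generated free module of positive rank over a commutative ring is faithfully flat.
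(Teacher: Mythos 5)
Your argument is correct and is exactly the intended one: the paper offers no proof at all, presenting the statement as an immediate corollary of Lemma \ref{le-steinberg-pittie}, and your deduction (a free module of positive rank over a commutative ring is faithfully flat, with injectivity supplied by Lemma \ref{le-weyl-action-repring}) fills in precisely that gap.
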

	
	\subsection{Equivariant Kunneth formula}
	The fundamental result relating the equivariant algebraic $K$-theory of a reductive group $G$ with that of its maximal torus $T$ is the equivariant Kunneth formula due to Merkurjev. This holds over an arbitrary algebraically closed field.
 
	\begin{proposition}[Equivariant Kunneth formula \cite{Merkurjev} 4.1]
		\label{prop-equiv-kunneth-form}
		Let $G$ be a reductive group, $T \subset G$ a maximal torus and $X$ a $G$-variety. Suppose that $G^{\der}$ is simply connected. For all $n\geq0$ the restriction map $r^{G}_{T}\colon \Kn^{G}(X)\rightarrow \Kn^{T}(X)$ factors through an isomorphism $\Kn^{G}(X)\otimes_{\R(G)}\R(T) \xrightarrow{\sim} \Kn^{T}(X)$. For $n=0$ this is an isomorphism of rings.
	\end{proposition}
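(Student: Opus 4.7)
The plan is to establish the isomorphism by passing through the flag variety $G/B$, where $B \supset T$ is a Borel subgroup. Writing $B = T \ltimes U$ with $U$ unipotent, Theorem \ref{th-ignore-unip} applied to $1 \to U \to B \to T \to 1$ gives $\Kn^T(X) \cong \Kn^B(X)$. The Morita equivalence for principal $B$-bundles (a foundational result of Thomason) then identifies $\Kn^B(X) \cong \Kn^G(G \times^B X)$, and the map $[g,x] \mapsto (gB, gx)$ is a $G$-equivariant isomorphism $G \times^B X \cong G/B \times X$ (with $G$ acting diagonally on the right-hand side). Hence $\Kn^T(X) \cong \Kn^G(G/B \times X)$. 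Applying this to $X = *$ already identifies $\Kz^G(G/B) \cong \R(T)$.

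The central task is then the K\"unneth-type isomorphism
$$\Kn^G(G/B \times X) \cong \Kn^G(X) \otimes_{\R(G)} \Kz^G(G/B),$$
which I would establish using the Bruhat decomposition $G/B = \bigsqcup_{w \in W} C_w$, where $C_w = BwB/B \cong \mathbb{A}^{\ell(w)}$. The corresponding filtration of $G/B$ by closed Schubert varieties, combined with the equivariant localization long exact sequence and homotopy invariance of equivariant $K$-theory under affine bundles, allows an inductive computation exhibiting $\Kn^G(G/B \times X)$ as a free $\Kn^G(X)$-module with basis indexed by $W$. Lemma \ref{le-steinberg-pittie} (which is precisely where the hypothesis that $G^{\der}$ is simply connected enters) then identifies this free module with $\Kn^G(X) \otimes_{\R(G)} \R(T)$. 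Composing with the isomorphism from Step 1 and checking compatibility yields the canonical map $r^{G}_{T}$; for $n=0$ the multiplicativity is automatic since every isomorphism in the chain respects tensor product of vector bundles.

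The main obstacle is the K\"unneth step, because the Schubert cells are only $B$-invariant, not $G$-invariant, so the $G$-equivariant localization sequence cannot be applied cell-by-cell on $G/B$. One therefore has to carry out the stratification argument at the $B$-equivariant level on $G/B \times X$ (where the strata are invariant for the natural left-right action) and translate back through the Morita equivalence used in Step 1. Alternatively, one can exploit Corollary \ref{cor-repring-fflat} to reduce the K\"unneth isomorphism to a faithfully flat descent statement from $\R(T)$ to $\R(G)$, using that $\Kn^T(X)$ is already known to be an $\R(T)$-module and that both sides become identified after base change to $\R(T)$. Either way, the simple-connectedness assumption is indispensable: via Steinberg-Pittie for the stratification route, and via faithful flatness for the descent route.
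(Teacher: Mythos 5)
The paper offers no internal argument for this proposition --- it is quoted directly from \cite{Merkurjev} --- so the only question is whether your sketch actually constitutes a proof. Your Step 1 is correct and purely formal: $\Kn^{T}(X)\cong \Kn^{B}(X)$ is Theorem \ref{th-ignore-unip}, and $\Kn^{B}(X)\cong \Kn^{G}(G\times^{B}X)\cong \Kn^{G}(G/B\times X)$ is exactly Proposition \ref{prop-Thoma-subgp} (your identification $[g,x]\mapsto (gB,gx)$ is fine). But this means the entire content of the proposition is concentrated in the K\"unneth step you defer, and neither of your two proposed ways of closing that gap works as stated. Running the cellular/localization argument ``at the $B$-equivariant level on $G/B\times X$'' computes $\Kn^{B}(G/B\times X)$: the cell $C_{w}\times X$ is $B$-isomorphic to $B\times^{B\cap wBw^{-1}}X$, so the associated graded is $\bigoplus_{w\in W}\Kn^{B\cap wBw^{-1}}(X)\cong\bigoplus_{w\in W}\Kn^{T}(X)$. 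That is (granting the theorem) $\R(T)\otimes_{\R(G)}\Kn^{T}(X)$, a strictly larger object than the one you need, namely $\Kn^{G}(G/B\times X)\cong \Kn^{T}(X)$; the Morita equivalence of Step 1 relates $\Kn^{B}(X)$ to $\Kn^{G}(G/B\times X)$, not to $\Kn^{B}(G/B\times X)$, so there is no ``translating back.'' The descent alternative is circular as written: to descend along the faithfully flat map $\R(G)\hookrightarrow\R(T)$ you must first identify $\Kn^{T}(X)\otimes_{\R(G)}\R(T)$ and equip it with a descent datum, and doing this honestly is essentially the Demazure/Hecke-module formalism of Section \ref{subsec-hecke-invariants} --- i.e.\ material of the same depth as the theorem itself.

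There is also a secondary gap at the end: knowing that $\Kn^{G}(G/B\times X)$ is a free $\Kn^{G}(X)$-module of rank $\rkWeyl$ and invoking Lemma \ref{le-steinberg-pittie} only produces an abstract isomorphism of free modules. To conclude that the \emph{canonical} map $a\otimes f\mapsto f\cdot r^{G}_{T}(a)$ is an isomorphism you must check that the basis produced by your filtration restricts to an $\R(G)$-basis of $\R(T)$ (a Steinberg-type basis); this is a genuine verification, not bookkeeping. The proofs in the literature (Merkurjev's, and McLeod's \cite{McLeod} in the topological setting) circumvent the non-$G$-invariance of the Bruhat cells differently, for instance by factoring $G/B\to G/P_{\alpha}\to\cdots\to\ast$ through the tower of parabolic quotients and applying the equivariant projective bundle theorem at each stage; there the hypothesis that $G^{\der}$ is simply connected is used a second time, geometrically, to realize each $\mathbb{P}^{1}$-fibration as the projectivization of a $G$-equivariant rank-two bundle attached to a fundamental weight. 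In short: your architecture matches the standard proofs, but the one step carrying all the content is exactly the one left open, and the fixes you propose do not supply it. Citing \cite{Merkurjev} here, as the paper does, is the honest alternative.
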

		
	\begin{remark}
		It does not follow that $\Kz^{G}(X)\simeq \Kz^{T}(X)^{W}$, see Example \ref{example-weyl-action}.
	\end{remark}	

 	\subsection{Changing the group}
	These technical results will be used for computing $\KTG$ in the following section.
    \begin{proposition}[\cite{Thomason} 6.2]
	\label{prop-Thoma-subgp}
		Let $X$ be a $G$-scheme and $H\subset G$ a closed subgroup. Restriction along $X\times \{H\} \hookrightarrow X \times G/H$ induces an isomorphism $\Kn^{G}(X\times G/H) \xrightarrow{\simeq} \Kn^{H}(X)$ for all $n\geq 0$. For $n=0$ this is an isomorphism of rings.
	\end{proposition}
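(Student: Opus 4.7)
The plan is to prove the proposition by exhibiting an equivalence of exact symmetric monoidal categories $\Vect^{G}(X \times G/H) \simeq \Vect^{H}(X)$ and then applying the $K$-theory functor, which sends equivalences of exact categories to isomorphisms of $K$-groups (and symmetric monoidal equivalences to ring isomorphisms in degree zero).

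First I would identify $X \times G/H$ with the contracted product $G \times^{H} X$ as a $G$-scheme via the $G$-equivariant isomorphism $[g,x] \mapsto (gx, gH)$. Under this identification, the closed immersion $X \times \{eH\} \hookrightarrow X \times G/H$ corresponds to the section $X \hookrightarrow G \times^{H} X$, $x \mapsto [e,x]$, which is $H$-equivariant when $X$ carries the $H$-action inherited from $H \subset G$. This reframes the problem: I need an equivalence $\Vect^{G}(G \times^{H} X) \simeq \Vect^{H}(X)$ implemented by pullback along this section.

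The key tool is faithfully flat descent along torsors. Consider $G \times X$ equipped with the two commuting actions $g' \cdot (g,x) = (g'g, x)$ of $G$ and $h \cdot (g,x) = (gh^{-1}, hx)$ of $H$. The projection $G \times X \to G \times^{H} X$ is a $G$-equivariant $H$-torsor, while the map $(g,x) \mapsto g^{-1}x$ exhibits $X$ as the quotient by the free $G$-action and is $H$-equivariant for the residual action. Descent along each torsor produces symmetric monoidal equivalences of exact categories
$$\Vect^{G}(G \times^{H} X) \xleftarrow{\sim} \Vect^{G \times H}(G \times X) \xrightarrow{\sim} \Vect^{H}(X),$$
and composing them gives the desired equivalence. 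Tracking the composite along the common section $x \mapsto (e,x)$ shows it coincides with pullback along $X \hookrightarrow G \times^{H} X$.

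Applying $\Kn$ then yields the isomorphism $\Kn^{G}(X \times G/H) \simeq \Kn^{H}(X)$, which is a ring isomorphism for $n=0$ by the monoidal property. The main technical hurdle I expect is the descent statement for equivariant vector bundles along a torsor, which is nontrivial to set up properly but is a standard consequence of Thomason's theory in \cite{Thomason}; the only remaining subtlety is verifying that the composite equivalence agrees with restriction to the unit fiber rather than a twist thereof, and this should follow cleanly from the observation that both descent equivalences are inverses to pullback along the same section $x \mapsto (e,x)$.
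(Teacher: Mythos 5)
The paper offers no proof of this statement at all --- it is quoted verbatim from Thomason (Proposition~6.2 of \cite{Thomason}) and used as a black box --- so you are supplying an argument where the paper supplies a citation. Your argument is essentially the standard one underlying Thomason's result: identify $X\times G/H$ with the contracted product $G\times^{H}X$ and run descent along the two torsor structures on $G\times X$, obtaining $\Vect^{G}(G\times^{H}X)\simeq\Vect^{G\times H}(G\times X)\simeq\Vect^{H}(X)$ as symmetric monoidal exact equivalences. This is sound, and your identification $[g,x]\mapsto(gx,gH)$ and the matching of the section with $X\times\{H\}$ are correct.

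There is one concrete slip in the middle step. With the actions you fixed, $g'\cdot(g,x)=(g'g,x)$ and $h\cdot(g,x)=(gh^{-1},hx)$, the map $(g,x)\mapsto g^{-1}x$ is neither $G$-invariant (it sends $(g'g,x)$ to $g^{-1}g'^{-1}x$) nor $H$-equivariant, so it does not exhibit $X$ as the quotient by the free $G$-action. For those actions the quotient map is the second projection $(g,x)\mapsto x$, which is $G$-invariant and $H$-equivariant, and the rest of your argument goes through with that substitution. (The map $(g,x)\mapsto g^{-1}x$ belongs to the other standard bookkeeping, where $G$ acts diagonally by $g'\cdot(g,x)=(g'g,g'x)$ and $H$ acts only on the first factor by $h\cdot(g,x)=(gh^{-1},x)$; that version quotients directly to $X\times G/H$ without passing through $G\times^{H}X$, but you cannot mix the two conventions.) Your closing remark is also the right one to flag: the section $x\mapsto(e,x)$ is not equivariant for the full $G\times H$-action, so identifying the composite equivalence with restriction to $X\times\{H\}$ requires tracking the induced $H$-structure on fibers (the $H$-action on $i^{*}V$ is the one inherited from the $G$-action on $V$ via $h\cdot[e,x]=[e,hx]$); this does work out, but it is a verification, not an observation.
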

 	\begin{proposition}
 	    Let $X$ be a $G$-scheme, $\chi \in X^{*}(G)$ and $H \coloneqq \ker(\chi)$. There is a long exact sequence
      \[
      \begin{tikzcd}[column sep=small]
          \ldots \arrow[r] & K_{n+1}^{H}(X) \arrow[r] & \Kn^{G}(X)\arrow[r, "1-\chi"] & \Kn^{G}(X) \arrow[r, "r_{H}^{G}"] & \Kn^{H}(X) \arrow[r] & K_{n-1}^{G}(X) \arrow[r] & \ldots
      \end{tikzcd}
      \]
 	\end{proposition}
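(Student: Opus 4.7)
The strategy is to realise the desired long exact sequence as the localisation sequence in equivariant $K$-theory for a suitably chosen $G$-equivariant pair. Since $\chi\colon G \to \mathbb{G}_{m}$ has kernel $H$, let $G$ act on $\mathbb{A}^{1}$ by $g\cdot t = \chi(g)^{-1}t$. Then the open complement $\mathbb{G}_{m}\subset \mathbb{A}^{1}$ is $G$-equivariantly isomorphic to $G/H$, since the stabiliser of $1\in \mathbb{G}_{m}$ is exactly $H$. Equip $X\times \mathbb{A}^{1}$ with the diagonal $G$-action, so that the zero section $i\colon X \hookrightarrow X\times \mathbb{A}^{1}$ is a $G$-stable closed immersion with open complement $j\colon X\times \mathbb{G}_{m} \hookrightarrow X\times \mathbb{A}^{1}$.

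Next, I would write down the localisation long exact sequence in equivariant $K$-theory (see \cite{Thomason}) attached to this pair and identify each of its terms. The projection $p\colon X\times \mathbb{A}^{1}\to X$ is $G$-equivariant and induces an isomorphism $p^{*}\colon \Kn^{G}(X) \xrightarrow{\sim} \Kn^{G}(X\times \mathbb{A}^{1})$ by homotopy invariance. Proposition \ref{prop-Thoma-subgp} applied to $X\times G/H \cong X\times \mathbb{G}_{m}$ identifies $\Kn^{G}(X\times \mathbb{G}_{m})\simeq \Kn^{H}(X)$, and under these two identifications the open-restriction map $j^{*}$ becomes the usual restriction $r_{H}^{G}$ (as seen by chasing the equivariant inclusion $X = X\times\{1\}\hookrightarrow X\times \mathbb{G}_{m}$ provided by Proposition \ref{prop-Thoma-subgp}).

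It then remains to identify the pushforward $i_{*}\colon \Kn^{G}(X)\to \Kn^{G}(X\times \mathbb{A}^{1})\cong \Kn^{G}(X)$ with multiplication by $1-\chi$. By the self-intersection formula, $(p^{*})^{-1}\circ i_{*}$ is multiplication by $\lambda_{-1}(N^{\vee}) = 1-[N^{\vee}]$ in $\R(G)$, where $N$ is the normal bundle of $i$. Since the normal direction is the tangent to $\mathbb{A}^{1}$ at $0$, on which $G$ acts by $\chi^{-1}$, the class $[N^{\vee}]$ is precisely $\chi \in \R(G)$, giving the claimed multiplication-by-$1-\chi$ map.

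The main obstacle is pinning down the sign conventions so that the answer is precisely $1-\chi$ rather than $1-\chi^{-1}$; this is resolved by choosing the action of $G$ on $\mathbb{A}^{1}$ to be via $\chi^{-1}$ from the outset. Otherwise the argument is a standard assembly of three well-known ingredients of Thomason's equivariant $K$-theory framework: the localisation sequence for a closed immersion with open complement, homotopy invariance along an equivariant vector bundle, and the self-intersection formula for a regular closed immersion.
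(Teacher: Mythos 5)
Your argument is correct and is precisely the standard proof the paper alludes to: its own proof consists of the single sentence ``this follows from the localisation long exact sequence of equivariant $K$-theory,'' and your write-up supplies exactly the intended details (the $G$-action on $\mathbb{A}^{1}$ via $\chi^{-1}$, homotopy invariance, the identification $\Kn^{G}(X\times G/H)\simeq \Kn^{H}(X)$, and the self-intersection formula identifying $i_{*}$ with multiplication by $1-\chi$). The only implicit hypotheses worth flagging --- that $\chi$ be nontrivial so that $\mathbb{G}_{m}\cong G/H$, and that $X$ be smooth so that $K$-theory agrees with $G$-theory and $i_{*}$ is defined --- are satisfied in every application in the paper and are glossed over by the paper as well.
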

  \begin{proof}
      This follows from the localisation long exact sequence of equivariant $K$-theory.
  \end{proof}
  \begin{corollary}[\cite{Merkurjev} 2.13]
		\label{prop-Merk-char}
		Let $X$ be a $G$-variety, $\chi \in X^{*}(G)$ and $H \coloneqq \ker(\chi)$. 
		\[\begin{tikzcd}
		\Kz^{G}(X) \arrow[r, "1-\chi"]& \Kz^{G}(X) \arrow[r]& \Kz^{H}(X) \arrow[r]& 0
		\end{tikzcd}\] is exact. In particular, $\Kz^{H}(X) \cong \Kz^{G}(X)/(1-\chi)\Kz^{G}(X)$ as rings.
	\end{corollary}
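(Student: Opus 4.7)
The plan is to extract this corollary directly from the long exact sequence of the preceding proposition applied at $n=0$, and then verify the ring-theoretic upgrade.

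First I would write out the long exact sequence at the low-degree end. Since negative $K$-theory vanishes (i.e. $K_{-1}^{G}(X)=0$ for a $G$-variety in the sense of Thomason), the tail of the sequence becomes
\[
K_{1}^{H}(X) \longrightarrow \Kz^{G}(X) \xrightarrow{\;1-\chi\;} \Kz^{G}(X) \xrightarrow{\;r_{H}^{G}\;} \Kz^{H}(X) \longrightarrow 0.
\]
The piece I actually need is the last four terms, and the exactness statement in the corollary is literally the exactness of this fragment: surjectivity of $r^{G}_{H}$ and $\ker(r^{G}_{H}) = (1-\chi)\Kz^{G}(X)$.

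Next I would promote the abelian-group statement to a ring statement. By the $n=0$ part of Proposition \ref{prop-Thoma-subgp} (and the general fact that restriction along a homomorphism of groups is a ring map on $\Kz$), the map $r^{G}_{H}\colon \Kz^{G}(X)\to \Kz^{H}(X)$ is a ring homomorphism. The character $\chi\in X^{*}(G)$ defines a one-dimensional $G$-equivariant vector bundle on $X$, hence a class $[\chi]\in \Kz^{G}(X)$; multiplication by the element $1-[\chi]$ of the commutative ring $\Kz^{G}(X)$ produces the ideal $(1-\chi)\Kz^{G}(X)$. Therefore the cokernel of multiplication by $1-\chi$ is naturally a ring, and the surjection induced by $r^{G}_{H}$ descends to a ring isomorphism
\[
\Kz^{G}(X)/(1-\chi)\Kz^{G}(X) \xrightarrow{\;\sim\;} \Kz^{H}(X).
\]

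There is essentially no obstacle beyond bookkeeping: the whole content is packaged into the localization long exact sequence invoked in the preceding proposition, together with vanishing of $K_{-1}$ for regular schemes with a group action. The only non-automatic verification is that multiplication by $\chi$ on $\Kz^{G}(X)$ really coincides with the boundary map in the long exact sequence; this is the standard identification of the connecting map for the localization sequence associated to the character $\chi$ (viewed as a section-style morphism to $\mathbb{A}^{1}$ on which $G$ acts by $\chi$), and is the content of Merkurjev's argument in \cite{Merkurjev}~2.13 that is being quoted here.
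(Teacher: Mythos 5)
Your proof is correct and matches the paper's (implicit) intent: the corollary is placed immediately after the localization long exact sequence precisely so that it follows by taking $n=0$ and using that the sequence terminates in a surjection onto $\Kz^{H}(X)$ (equivalently, that the connective/regular setting has no negative $K$-groups), with the ring upgrade being automatic since $r^{G}_{H}$ is a ring map and $(1-\chi)\Kz^{G}(X)$ is an ideal. The identification of the map with multiplication by $1-\chi$ is already built into the statement of the preceding proposition, so no further verification is needed beyond what you give.
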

	\begin{corollary}
		\label{cor-subtor-quot-ktheory}
		Let $T$ be a torus, $S \subset T$ a sub-torus and $X$ a $T$-variety. The restriction map $\Kz^{T}(X)\rightarrow \Kz^{S}(X)$ is surjective with kernel $$\langle1-\chi \mid \chi \in X^{*}(T)\text{ such that } \chi|_{S} = 1\rangle\Kz^{T}(X)$$
	\end{corollary}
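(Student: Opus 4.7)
The plan is to reduce this to an iterated application of the preceding corollary (Merkurjev's character corollary). The key observation is that for a sub-torus $S \subset T$, the subgroup of characters vanishing on $S$,
\[
K := \{\chi \in X^{*}(T) \mid \chi|_{S} = 1\},
\]
is naturally identified with $X^{*}(T/S)$ and is therefore a free abelian group of finite rank. Pick a $\mathbb{Z}$-basis $\chi_{1},\ldots,\chi_{n}$ of $K$. Since $S$ is a sub-torus, it coincides with the common kernel $\bigcap_{i=1}^{n}\ker(\chi_{i})$.

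Next, I would set up the filtration $H_{0} := T$ and $H_{i} := \ker(\chi_{1}) \cap \cdots \cap \ker(\chi_{i}) = \ker(\chi_{i}|_{H_{i-1}})$, so that $H_{n} = S$. Applying Corollary \ref{prop-Merk-char} at each step gives a surjection $\Kz^{H_{i-1}}(X) \twoheadrightarrow \Kz^{H_{i}}(X)$ whose kernel is $(1-\chi_{i}|_{H_{i-1}})\Kz^{H_{i-1}}(X)$. Composing and unwinding, the restriction $\Kz^{T}(X) \twoheadrightarrow \Kz^{S}(X)$ is surjective with kernel
\[
(1-\chi_{1},\ldots,1-\chi_{n})\Kz^{T}(X),
\]
since the preimage in $\Kz^{T}(X)$ of $(1-\chi_{i}|_{H_{i-1}})\Kz^{H_{i-1}}(X)$ is the ideal generated by $1-\chi_{1},\ldots,1-\chi_{i}$.

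It then remains to identify this finitely-generated ideal with the full ideal $\langle 1-\chi \mid \chi \in K\rangle\Kz^{T}(X)$. The containment $(1-\chi_{1},\ldots,1-\chi_{n}) \subset \langle 1-\chi \mid \chi \in K\rangle$ is immediate since each $\chi_{i} \in K$. For the reverse direction, any $\chi \in K$ is a $\mathbb{Z}$-linear combination $\chi = \chi_{1}^{a_{1}}\cdots \chi_{n}^{a_{n}}$ in $\R(T)$, and the elementary identities
\[
1-ab = (1-a) + a(1-b), \qquad 1-a^{-1} = -a^{-1}(1-a)
\]
show inductively that $1-\chi$ lies in $(1-\chi_{1},\ldots,1-\chi_{n})\R(T)$. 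Multiplying by elements of $\Kz^{T}(X)$ gives the equality of ideals in $\Kz^{T}(X)$, completing the proof.

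No step here is a serious obstacle; the only real content is in Corollary \ref{prop-Merk-char} itself, and the rest is bookkeeping: choosing a basis to pass from an arbitrary sub-torus to a finite intersection of hyperplane-kernels, and an elementary lemma on ideals of the form $(1-\chi)$. The mildest point to be careful about is making sure the ideal equality is stated inside $\Kz^{T}(X)$ and not merely inside $\R(T)$, but this follows at once from the $\R(T)$-algebra structure.
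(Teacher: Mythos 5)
Your proof is correct and follows the same route as the paper: choose characters $\chi_{1},\ldots,\chi_{n}$ cutting out $S$ as a chain of successive kernels and apply Corollary \ref{prop-Merk-char} repeatedly. You spell out two details the paper leaves implicit — that a $\mathbb{Z}$-basis of $\{\chi : \chi|_{S}=1\}$ does cut out exactly $S$, and that the finitely generated ideal $(1-\chi_{1},\ldots,1-\chi_{n})$ equals the full ideal via $1-ab=(1-a)+a(1-b)$ — but the argument is the same.
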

	
	\begin{proof}
		One can choose $\chi_{1},\ldots,\chi_{n} \in X^{*}(T)$ with $\chi_{i}|_{S} = 1$ such that defining $T_{1} \coloneqq \ker(\chi_{1})$, $T_{2} \coloneqq \ker(\chi_{2}|_{T_{1}})$, \ldots  , $T_{n} \coloneqq \ker(\chi_{n}|_{T_{n-1}})$ one has $T_{n} = S$ and the result follows from successive application of Proposition \ref{prop-Merk-char}.
	\end{proof}

 \section{The torus action}
	\label{sec-untwist}
	Recall that $p>0$ is a prime and $k$ is an algebraic closure of $\mathbb{F}_{p}$. 
    Throughout Sections \ref{sec-untwist} and \ref{sec-main-th-proof}, let $G$ be a reductive group over $k$ such that $G^{\der}$ is simply connected. Let $T \subset G$ be a maximal torus, $L \subset G$ a Levi subgroup containing $T$ and $\varphi \colon G \rightarrow G$ the Frobenius homomorphism.
	
	\vspace{3mm}
	
	The aim of this section is to compute $\KTG$, the equivariant algebraic $\Kz$-ring of the $T$-action on $G$ given by $t\cdot g \coloneqq tg\varphi(t)^{-1}$. This requires ‘untwisting’ the $T$-action, which can be done by employing a trick used by Brokemper \cite{Brokemper} and considering the short exact sequence 
	\[\begin{tikzcd}
	1 \arrow[r]& T \arrow[r, "\iota"]& T\times T \arrow[r, "p"]& T \arrow[r] &1
	\end{tikzcd}\] where $\iota(t) = (t,\varphi(t))$ and $p(t_{1},t_{2}) = \varphi(t_{1})t_{2}^{-1}$. Taking the conjugation action $((t_{1},t_{2}),g) \mapsto t_{1}gt_{2}^{-1}$ of $T\times T$ on $G$ gives a pullback map $r^{T\times T}_{T}\colon \Kz^{T\times T}(G)\rightarrow\KTG$ to the desired ring. 
	\begin{proposition}	\label{prop-torus-action-quotient}
		The restriction map $r^{T\times T}_{T} \colon \Kz^{T\times T}(G)\rightarrow\KTG$ is surjective with kernel $J(T)\cdot\Kz^{T\times T}(G)$ where $J(T)$ acts via $\alpha\cdot f \coloneqq p^{*}(\alpha)f$.
	\end{proposition}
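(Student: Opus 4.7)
The plan is to apply Corollary~\ref{cor-subtor-quot-ktheory} with ambient torus $T' = T\times T$ and subtorus $S = \iota(T)\subset T'$. First I verify that this is legitimate: $\iota$ is a closed embedding of tori (it is a section of the surjection $p$), and the $T\times T$-conjugation action on $G$ restricts through $\iota$ to the desired $\varphi$-conjugation $T$-action, since $\iota(t)\cdot g = (t,\varphi(t))\cdot g = tg\varphi(t)^{-1}$. Corollary~\ref{cor-subtor-quot-ktheory} then immediately yields that $r^{T\times T}_{T}$ is surjective with kernel
\[
\langle 1-\chi \mid \chi \in X^{*}(T\times T),\ \chi|_{\iota(T)}=1\rangle \cdot \Kz^{T\times T}(G).
\]

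The remaining task is to identify the set of characters trivial on $\iota(T)$. Since $\iota(T) = \ker(p)$ in $T\times T$, the anti-equivalence between tori over $k$ and finitely generated free abelian groups (given by $X^{*}$) converts the short exact sequence of tori into a short exact sequence of character groups
\[
0 \to X^{*}(T) \xrightarrow{p^{*}} X^{*}(T\times T) \xrightarrow{\iota^{*}} X^{*}(T) \to 0.
\]
Exactness in the middle is precisely the statement that the characters of $T\times T$ trivial on $\iota(T)$ are exactly the characters of the form $p^{*}(\chi')$ with $\chi' \in X^{*}(T)$.

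Assembling the two steps, the kernel of $r^{T\times T}_{T}$ equals $\langle 1 - p^{*}(\chi') \mid \chi' \in X^{*}(T)\rangle\cdot \Kz^{T\times T}(G)$. Since $J(T) \subset \R(T)$ is generated as an ideal by $\{1-\chi \mid \chi \in X^{*}(T)\}$, this ideal coincides with $p^{*}(J(T))\cdot \Kz^{T\times T}(G)$, which under the stated action $\alpha\cdot f = p^{*}(\alpha)f$ is precisely $J(T)\cdot \Kz^{T\times T}(G)$, as desired. The proof is essentially an assembly of Corollary~\ref{cor-subtor-quot-ktheory} together with the dual short exact sequence of character groups; there is no genuinely hard step, the only substantive content being the identification of characters trivial on $\iota(T)$ via the dual sequence.
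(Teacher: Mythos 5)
Your proof is correct and follows the same route as the paper: the paper's proof is likewise a direct application of Corollary~\ref{cor-subtor-quot-ktheory}, choosing the characters $\chi_{i}=p^{*}(e_{i})$ for a $\mathbb{Z}$-basis $\{e_{i}\}$ of $X^{*}(T)$, which is exactly your identification of the characters of $T\times T$ vanishing on $\iota(T)$ via the dual short exact sequence. You simply spell out the details the paper leaves implicit.
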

	\begin{proof}
		Direct application of Corollary \ref{cor-subtor-quot-ktheory}. Given a $\mathbb{Z}$-basis $\{e_{1},\ldots,e_{n}\}$ of $X^{*}(T)$ we can choose $\chi_{i} = p^{*}(e_{i})$ for the application of Corollary \ref{cor-subtor-quot-ktheory}.
	\end{proof}

	\begin{proposition}
    \label{prop-torus-action-quotient-2}
		$\Kz^{T\times T}(G) \simeq \Kz^{B\times T}(G) \simeq \Kz^{T}(G/B)$.
	\end{proposition}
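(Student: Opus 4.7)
The plan is to establish the two isomorphisms separately, both via tools already recalled earlier in the paper.

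For $\Kz^{T\times T}(G)\simeq \Kz^{B\times T}(G)$, I would extend the $T\times T$-action on $G$, namely $(t_{1},t_{2})\cdot g = t_{1} g t_{2}^{-1}$, to a $B\times T$-action via $(b,t)\cdot g = bgt^{-1}$. The Levi decomposition $B = T\ltimes \mathcal{R}_{u}(B)$ then yields a split short exact sequence
\[
1 \longrightarrow \mathcal{R}_{u}(B)\times\{1\} \longrightarrow B\times T \longrightarrow T\times T \longrightarrow 1
\]
with reductive quotient $T\times T$ and unipotent kernel $\mathcal{R}_{u}(B)\times\{1\}$, so Theorem \ref{th-ignore-unip} identifies the two equivariant $\Kz$-rings via restriction.

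For $\Kz^{B\times T}(G)\simeq \Kz^{T}(G/B)$, I would apply Proposition \ref{prop-Thoma-subgp} twice with the intermediate group $\Gamma \coloneqq G\times T$ acting on $G$ by $(g,t)\cdot h = ght^{-1}$. Taking $H_{1}\coloneqq B\times T\subset \Gamma$, one has $\Gamma/H_{1} = G/B$, and applying Proposition \ref{prop-Thoma-subgp} with $X=G$ yields
\[
\Kz^{B\times T}(G) \;\simeq\; \Kz^{G\times T}(G\times G/B),
\]
where $\Gamma$ acts diagonally via $(g,t)\cdot(h,yB) = (ght^{-1},gyB)$. For the second application, take $H_{2}\coloneqq \Delta T\subset \Gamma$: the map $(g,t)\Delta T\mapsto gt^{-1}$ identifies $\Gamma/\Delta T$ with $G$ carrying precisely the above $\Gamma$-action. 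Viewing $G/B$ as a $\Gamma$-scheme with the usual $G$-action and trivial $T$-action, Proposition \ref{prop-Thoma-subgp} with $X = G/B$ yields
\[
\Kz^{T}(G/B) \;=\; \Kz^{\Delta T}(G/B) \;\simeq\; \Kz^{G\times T}(G/B\times G).
\]
Since the two right-hand sides agree up to the $\Gamma$-equivariant swap of factors, composing through the common middle gives the desired isomorphism.

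The main conceptual point is the identification $\Gamma/\Delta T\simeq G$ as $\Gamma$-schemes with the action $(g,t)\cdot h = ght^{-1}$: this move is what allows one to ``fold'' the $B\times T$-equivariance on $G$ into $T$-equivariance on the flag variety through a single $\Gamma$-equivariant intermediate object. Checking that the two applications of Proposition \ref{prop-Thoma-subgp} produce the same diagonal $\Gamma$-action on $G\times G/B$ is then routine. A more transparent but less internal alternative would be to invoke descent of equivariant vector bundles along the Zariski-locally-trivial principal $B$-torsor $G\to B\backslash G \cong G/B$, directly giving $\Vect^{B\times T}(G)\simeq \Vect^{T}(G/B)$.
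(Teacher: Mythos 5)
Your proof is correct, and for the second isomorphism it takes a genuinely different route from the paper. The first isomorphism is handled exactly as in the paper, via Theorem \ref{th-ignore-unip} applied to the split exact sequence $1\to\mathcal{R}_{u}(B)\times\{1\}\to B\times T\to T\times T\to 1$. For $\Kz^{B\times T}(G)\simeq\Kz^{T}(G/B)$, the paper argues directly by descent along the $B$-torsor $G\to B\backslash G$, exhibiting an explicit equivalence of categories $\Vect^{B\times T}(G)\simeq\Vect^{T}(B\backslash G)\simeq\Vect^{T}(G/B)$ via pullback and invariant pushforward (essentially the ``less internal alternative'' you mention at the end). You instead run Proposition \ref{prop-Thoma-subgp} twice through the intermediate object $K_0^{G\times T}(G\times G/B)$, using $(G\times T)/(B\times T)\cong G/B$ on one side and $(G\times T)/\Delta T\cong G$ with the action $(g,t)\cdot h=ght^{-1}$ on the other; your verifications of the two quotient identifications and of the (swap-equivariant) agreement of the diagonal actions are the nontrivial checks, and they are right. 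Your approach has the virtue of staying entirely within results already quoted in the paper and avoiding any explicit construction of descent functors; the paper's approach is more elementary and produces a concrete equivalence of categories, which makes it slightly easier to see that the resulting isomorphism is compatible with the restriction maps used in the commutative diagram of Proposition \ref{prop-torus-action-quotient-3} (a compatibility that the bare statement does not require, but that the later argument implicitly uses and that your composite isomorphism also satisfies, being the standard Morita-type identification).
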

	\begin{proof}
		The first isomorphism follows from \ref{th-ignore-unip}. For the second isomorphism, there is an equivalence of categories $\Vect^{B \times T}(G) \simeq \Vect^{T}(G/B)$ as follows.
		
		\vspace{3mm}
		
		Consider the $B$-torsor $\pi \colon G \rightarrow B\backslash G$. Given a vector bundle $V \in \Vect^{T}(B\backslash G)$ there is an action of $B \times T$ on its pullback $\pi^*$ such that it becomes a $B \times T$-vector bundle on $G$. That is, pullback gives a functor $\pi^* \colon \Vect^T(B \backslash G) \rightarrow \Vect^{B\times T}(G)$. Invariant pushforward gives a quasi inverse $\Vect^{B\times T}(G) \rightarrow \Vect^{T}(B\backslash G)$. The $T$-equivariant isomorphism $i\colon G/B \rightarrow B \backslash G$ given by $gB \mapsto Bg$ induces an equivalence of categories $i^{*}\colon \Vect^{T}(B \backslash G) \simeq \Vect^{T}(G/B)$.
	\end{proof}

 \begin{proposition}
 \label{prop-torus-action-quotient-3}
		The map $\Kz^{B}(*) \simeq \Kz^{B\times G}(G) \rightarrow \Kz^{B\times T}(G)$ factors through an isomorphism $\Kz^{B}(*) \otimes_{\R(G)}\R(T) \simeq \Kz^{B\times T}(G)$. 
	\end{proposition}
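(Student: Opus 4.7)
The plan is to reduce the statement to applying the equivariant Künneth formula (Proposition \ref{prop-equiv-kunneth-form}) to the $G$-variety $G/B$, by an argument parallel to Proposition \ref{prop-torus-action-quotient-2}.

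First I would extend the argument of Proposition \ref{prop-torus-action-quotient-2} to establish a ring isomorphism $\Kz^{B\times G}(G) \simeq \Kz^G(G/B)$. The $B$-torsor $\pi\colon G \to B\backslash G$ is equivariant for the right action of $G$, so pullback along $\pi$ defines an equivalence $\Vect^G(B\backslash G) \simeq \Vect^{B\times G}(G)$ with invariant pushforward as quasi-inverse, and the $G$-equivariant isomorphism $G/B \simeq B\backslash G$ identifies the two target categories; here the $B\times G$-action on $G$ is $(b, g_0)\cdot g = b g g_0^{-1}$, and the induced $G$-action on $G/B$ is left translation. At the same time, Proposition \ref{prop-Thoma-subgp} applied with $X = *$ and the diagonal embedding $B \hookrightarrow B\times G$, $b \mapsto (b,b)$ (so that $(B\times G)/B \simeq G$ equivariantly) yields the identification $\Kz^{B\times G}(G) \simeq \Kz^B(*)$ appearing in the statement.

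Next, the equivariant Künneth formula applied to the smooth $G$-variety $G/B$, using that $G^{\der}$ is simply connected (assumed throughout this section), gives that the restriction $r^G_T\colon \Kz^G(G/B) \to \Kz^T(G/B)$ factors through a ring isomorphism
\[
\Kz^G(G/B)\otimes_{\R(G)} \R(T) \xrightarrow{\sim} \Kz^T(G/B).
\]
Combining with Proposition \ref{prop-torus-action-quotient-2}, which identifies $\Kz^T(G/B) \simeq \Kz^{B\times T}(G)$, together with the identifications of the previous paragraph, yields the chain of ring isomorphisms
\[
\Kz^B(*)\otimes_{\R(G)} \R(T) \simeq \Kz^G(G/B)\otimes_{\R(G)} \R(T) \simeq \Kz^T(G/B) \simeq \Kz^{B\times T}(G).
\]

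The main obstacle is to verify that this composition genuinely factors the map $\Kz^B(*) \simeq \Kz^{B\times G}(G) \to \Kz^{B\times T}(G)$ induced by the subgroup inclusion $B\times T \hookrightarrow B\times G$. This reduces to the naturality of the equivalence $\Vect^{B\times H}(G) \simeq \Vect^H(G/B)$ in the group $H$ (applied with $H = T$ and $H = G$), which holds because all the identifications are constructed from the single $B$-torsor $G \to B\backslash G$; the rest is bookkeeping.
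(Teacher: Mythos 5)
Your proposal is correct and follows essentially the same route as the paper: identify $\Kz^{B\times G}(G)\simeq\Kz^{G}(G/B)$ and $\Kz^{B\times T}(G)\simeq\Kz^{T}(G/B)$ via descent along the $B$-torsor $G\to B\backslash G$, apply the equivariant K\"unneth formula (Proposition \ref{prop-equiv-kunneth-form}) to $G/B$, and check compatibility of the restriction maps. The paper packages the final naturality check into a single commutative diagram, but the content is identical.
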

	\begin{proof} Here we continue to write $\Kz^{B}(*)$ instead of the isomorphic $\R(B)$ (or $\R(T)$) in order to distinguish the different factors. 
 
    The first isomorphism follows from (\ref{prop-Thoma-subgp}). The restriction maps are compatible with the restriction map $\R(B \times G) \rightarrow \R(B \times T)$ so they factor through the tensor product with $\R(B \times T)$ over $\R(B\times G)$ (this is the same as the tensor product with $\R(T)$ over $\R(G)$).
			\[\begin{tikzcd}
			\Kz^{B\times T}(G) \arrow[r, "\sim"]& \Kz^{T}(G/B) \\ \Kz^{B\times G}(G)\otimes_{\R(G)}\R(T) \arrow[r, "\sim"] \arrow[u, "\alpha"] & \Kz^{G}(G/B)\otimes_{\R(G)}\R(T) \arrow[u, "\beta"]\\ \Kz^{B\times G}(G) \arrow[r, "\sim"] \arrow[r, "\sim"]\arrow[u]& \Kz^{G}(G/B) \arrow[u]
		\end{tikzcd}\]
	The map $\beta$ is an isomorphism by Proposition \ref{prop-equiv-kunneth-form} and so $\alpha$ must also be an isomorphism.
	\end{proof}

	\begin{proposition}
	\label{prop-torus-action-quotient-4}
		The ring homomorphism $$\theta \colon \Kz^{B}(*)\rightarrow \frac{\Kz^{B}(*)\otimes_{\R(G)}\R(T)}{J(T)(\Kz^{B}(*)\otimes_{\R(G)}\R(T))}\text{ given by }f\mapsto f\otimes 1$$ is surjective with kernel $I \Kz^{B}(*)$, where $I \coloneqq (c-\varphi(c) \mid c \in \R(G)) \subset \R(B)$.
	\end{proposition}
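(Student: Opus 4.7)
The strategy is to make the $J(T)$-action on $A \coloneqq \Kz^{B}(*)\otimes_{\R(G)}\R(T)$ explicit, and then construct an $\R(G)$-balanced section of $\bar\theta$ landing in $\R(T)/I\R(T)$. Recall from Proposition \ref{prop-torus-action-quotient} that $J(T)$ acts on $A$ via the restriction $p^{*}\colon \R(T)\to \R(T\times T) \cong \R(T)\otimes_{\mathbb{Z}}\R(T)$ induced by $p(t_{1},t_{2}) = \varphi(t_{1})t_{2}^{-1}$. For a character $\chi \in X^{*}(T)$ this gives $p^{*}(\chi) = \varphi(\chi)\otimes \chi^{-1}$, so tracing through the isomorphisms of Propositions \ref{prop-torus-action-quotient-2} and \ref{prop-torus-action-quotient-3}, the ideal $J(T)A$ is generated by the elements $1-\varphi(\chi)\otimes \chi^{-1}$. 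Multiplying by the unit $1\otimes \chi$ gives the cleaner relation
\[1\otimes \chi \equiv \varphi(\chi)\otimes 1 \pmod{J(T)A}.\]
Extending $\mathbb{Z}$-linearly yields $f\otimes g \equiv f\varphi(g)\otimes 1 = \theta(f\varphi(g))$ for every simple tensor, which establishes surjectivity of $\theta$.

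To pin down the kernel, define the ring homomorphism $\mu\colon \R(T)\otimes_{\mathbb{Z}}\R(T) \to \R(T)$ by $f\otimes g \mapsto f\varphi(g)$. Modulo $I\R(T)$ the map $\mu$ becomes well-defined on the $\R(G)$-balanced tensor product, since for $c \in \R(G)$
\[\mu(fc\otimes g) - \mu(f\otimes cg) = f(c-\varphi(c))\varphi(g) \in I\R(T)\]
by the very definition of $I$. A second short check,
\[\mu(1-\varphi(\chi)\otimes\chi^{-1}) = 1 - \varphi(\chi)\varphi(\chi^{-1}) = 0,\]
shows that $\mu$ descends further through $J(T)A$, producing $\bar\mu\colon A/J(T)A \to \R(T)/I\R(T)$. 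By construction $\bar\mu\circ\theta$ is the canonical projection $\R(T)\to\R(T)/I\R(T)$, hence $\ker\theta \subseteq I\Kz^{B}(*)$.

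The reverse inclusion is immediate from the first step: for $c \in \R(G)$ one has $c\otimes 1 = 1\otimes c$ in $A$ by $\R(G)$-balancing, and modulo $J(T)A$ the latter equals $\varphi(c)\otimes 1$, so $\theta(c-\varphi(c)) = 0$; since $\theta$ is a ring homomorphism this forces $I\Kz^{B}(*) \subseteq \ker\theta$. The main technical obstacle I anticipate is the first step, namely carefully tracing the $J(T)$-action through the chain of identifications in Propositions \ref{prop-torus-action-quotient-2} and \ref{prop-torus-action-quotient-3} to justify the explicit form of $J(T)A$. Once that is in hand, the key conceptual point is that $\mu(f\otimes g) = f\varphi(g)$ is the unique natural collapse that simultaneously absorbs the Frobenius twist encoded in $p^{*}$ and is $\R(G)$-balanced modulo $I$, and this dual role immediately forces the kernel to be exactly $I\Kz^{B}(*)$.
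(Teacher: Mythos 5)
Your proposal is correct and follows essentially the same route as the paper: surjectivity via the relation $1\otimes\chi\equiv\varphi(\chi)\otimes 1$ modulo $J(T)A$, and the kernel via the retraction $f\otimes g\mapsto f\varphi(g)$, which is exactly the paper's left inverse $\sigma$. The only cosmetic difference is that you obtain the key relation by multiplying $1-\varphi(\chi)\otimes\chi^{-1}$ by the unit $1\otimes\chi$, whereas the paper uses an iterated product of the elements $(1-\chi_i)$.
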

		
	\begin{proof}
		The element $1-\chi \in J(T) \subset R(T)$ acts on $\Kz^{B}(*)\otimes_{\R(G)}\R(T)$ as $(1-\chi)\cdot (g \otimes h) = (1\otimes 1 - \varphi(\chi)\otimes \chi^{-1})\cdot (g\otimes h) = g \otimes h - \varphi(\chi)g\otimes\chi^{-1}h$.
		
		\vspace{3mm}
		
		Surjectivity: It suffices to show that $1 \otimes \chi_{1}\ldots \chi_{n}$ lies in the image for all $\chi_{1},\ldots,\chi_{n} \in X^{*}(T)$. Now, 
		\begin{equation*}
			\begin{aligned}
				0 &= (1-\chi_{n})\cdot\ldots\cdot(1-\chi_{1}) \cdot(1\otimes \chi_{1}\ldots \chi_{n}) \\&= 1\otimes (\chi_{1}\ldots \chi_{n}) - \varphi(\chi_{1}\ldots \chi_{n})\otimes 1 \\&= 1\otimes (\chi_{1}\ldots \chi_{n}) - \theta(\varphi(\chi_{1}\ldots \chi_{n}))
			\end{aligned} 
		\end{equation*} so $1\otimes (\chi_{1}\ldots \chi_{n}) = \theta(\varphi(\chi_{1}\ldots \chi_{n}))$ is in the image.
		
		\vspace{3mm}
		
		Kernel: Firstly, note that for all $c \in \R(G)$ the element $(c-\varphi(c))\otimes 1 = 1\otimes c - \varphi(c)\otimes 1$ lies in $J(T)(\Kz^{B}(*)\otimes_{\R(G)}\R(T))$ so $I \subset \ker(\theta)$. Define a left inverse $\sigma$ for $\overline{\theta} \colon \frac{\Kz^{B}(*)}{I\Kz^{B}(*)}\rightarrow \frac{\Kz^{B}(*)\otimes_{\R(G)}\R(T)}{J(T)(\Kz^{B}(*)\otimes_{\R(G)}\R(T))}$ via $f \otimes g \mapsto \varphi(g)f$. This is well defined since $1\otimes c - c\otimes 1 \mapsto 0$ for all $c \in \R(G)$ and $(1-\chi)(f\otimes g) \mapsto 0$ for all $\chi \in X^{*}(T)$ and $f\otimes g \in \Kz^{B}(*)\otimes \R(T)$. It is a left inverse since $\sigma \circ \overline{\theta}(f) = f$ for all $f \in \Kz^{B}(*)$. Hence, $\overline{\theta}$ is injective and $\ker(\theta) = I\Kz^{B}(*)$.
		
	\end{proof}		
	\begin{corollary}
		\label{cor-kthy-tor-act}
		The map $r^{T\times T}_{T}\circ r^{T \times G}_{T\times T} \colon \R(T) \rightarrow \Kz^{T\times T}(G) \rightarrow \KTG$ is surjective with kernel $I\R(T)$ where $I \subset \R(G)$ is as in \ref{prop-torus-action-quotient-4}.
	\end{corollary}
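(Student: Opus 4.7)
The plan is to assemble the corollary directly from the chain of propositions already established in this section. By Proposition \ref{prop-torus-action-quotient}, the map $r^{T\times T}_{T}\colon \Kz^{T\times T}(G) \to \KTG$ is surjective with kernel $J(T)\cdot \Kz^{T\times T}(G)$, so it suffices to analyse the composite map from $\R(T)$ into $\Kz^{T\times T}(G)/J(T)\Kz^{T\times T}(G)$. Combining Propositions \ref{prop-torus-action-quotient-2} and \ref{prop-torus-action-quotient-3} provides the identification $\Kz^{T\times T}(G) \simeq \Kz^{B\times T}(G) \simeq \Kz^{B}(*)\otimes_{\R(G)}\R(T)$, with $\Kz^{B}(*)\simeq \R(B) \simeq \R(T)$ via Theorem \ref{th-ignore-unip}. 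Under these identifications, Proposition \ref{prop-torus-action-quotient-4} asserts exactly that the map $\theta\colon \R(T)\cong\Kz^{B}(*) \to \Kz^{B}(*)\otimes_{\R(G)}\R(T)/J(T)(\ldots)$ given by $f\mapsto f\otimes 1$ is surjective with kernel $I\Kz^{B}(*) = I\R(T)$.

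What remains is to verify that the map $r^{T\times T}_{T}\circ r^{T\times G}_{T\times T}$ of the corollary statement coincides, after the identifications above, with the map $\theta$ of Proposition \ref{prop-torus-action-quotient-4}. To check this, I would set up the diagram
\[
\begin{tikzcd}
\R(T) \arrow[r, "\sim"] \arrow[rd, "\theta"'] & \Kz^{T\times G}(G) \arrow[r, "r^{T\times G}_{T\times T}"] & \Kz^{T\times T}(G) \arrow[d, two heads] \\
 & & \KTG
\end{tikzcd}
\]
and trace the image of a character $\chi \in X^{*}(T)\subset \R(T)$: under the chain of isomorphisms $\Kz^{B}(*) \simeq \Kz^{B\times G}(G) \simeq \Kz^{T\times G}(G)$ the class $\chi$ maps to the $T\times G$-equivariant line bundle $\mathcal{O}_{G}\otimes \chi$ (with $G$ acting by right translation and $T$ by left translation weighted by $\chi$), and restriction to $\Kz^{T\times T}(G)$ produces the class matching $\chi \otimes 1$ on the tensor product side. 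The compatibility of the restriction maps used in the proofs of Propositions \ref{prop-torus-action-quotient-2} and \ref{prop-torus-action-quotient-3} then yields the commutativity of the diagram.

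The main obstacle is this final diagram chase: none of it is deep, but one must carefully track which factor of $T\times T$ corresponds to the embedding $t\mapsto(t,\varphi(t))$ versus $(t_{1},t_{2})\mapsto \varphi(t_{1})t_{2}^{-1}$, since this is what creates the asymmetry producing the ideal $I=(c-\varphi(c))$ rather than a kernel symmetric in $c$ and $\varphi(c)$. Once the maps match, the conclusion is immediate: $\KTG \simeq \R(T)/I\R(T)$, with the isomorphism induced by the composite in the statement.
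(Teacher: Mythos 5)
Your proposal follows exactly the paper's route: the paper's proof of this corollary is simply ``Follows from \ref{prop-torus-action-quotient}--\ref{prop-torus-action-quotient-4},'' i.e.\ the same assembly of the four preceding propositions that you carry out. The extra care you take in checking that the composite $r^{T\times T}_{T}\circ r^{T\times G}_{T\times T}$ really matches $\theta$ under the identifications is a worthwhile elaboration of a step the paper leaves implicit, but it is not a different argument.
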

 \begin{proof}
     Follows from \ref{prop-torus-action-quotient}-\ref{prop-torus-action-quotient-4}.
 \end{proof}

 \section{Computation of \texorpdfstring{$\Kz(\GZipmu)$}{}}
	\label{sec-main-th-proof}

	%This implies that $L$ also has simply-connected derived group - why??.
	
	\begin{theorem}
		\label{th-main-intext}
		Let $G$ be a reductive group, $T \subset G$ a maximal torus, $L \subset G$ a Levi subgroup containing $T$ and $\varphi \colon G \rightarrow G$ be the Frobenius map. Suppose that $G^{\der}$ is simply connected. Consider the $L$-action on $G$ via $\varphi$-conjugation $l\cdot g = lg\varphi(l)^{-1}$. 
	 	$$\KLG = \R(L)/I\R(L)$$
		where $I \coloneqq \{c-\varphi(c) \mid c \in \R(G)\} \subset R(G)$.
	\end{theorem}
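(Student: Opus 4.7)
The strategy is to deduce $\KLG$ from the already-computed ring $\KTG \cong \R(T)/I\R(T)$ of Corollary \ref{cor-kthy-tor-act}, using the equivariant Künneth formula together with faithfully flat descent. Concretely, the plan is to construct a natural ring homomorphism $\psi \colon \R(L)/I\R(L) \to \KLG$, show it becomes an isomorphism after base change to $\R(T)$, and then invoke the faithful flatness of $\R(L)\hookrightarrow \R(T)$ from Corollary \ref{cor-repring-fflat} to descend.

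For the construction of $\psi$, I would take the pullback along $G \to *$, giving a natural $\R(L)$-algebra map $\R(L) = \Kz^{L}(*) \to \KLG$. The content is to check that the image of $I$ vanishes: for $c \in \R(G)$ with underlying representation $(V,\rho)$, one needs the classes $[c|_L]$ and $[\varphi(c)|_L]$ to agree in $\KLG$. Both classes are represented by the trivial bundle $G \times V$, with $L$-action $l\cdot(g,v) = (lg\varphi(l)^{-1}, \rho(l)v)$ and $l\cdot(g,v)=(lg\varphi(l)^{-1}, \rho(\varphi(l))v)$ respectively, and a direct computation shows that the bundle automorphism $(g,v)\mapsto (g, \rho(g)^{-1}v)$ — which makes sense because $\rho$ is defined on all of $G$ — is $L$-equivariant between the two actions. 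This gives $I \cdot \R(L) \subset \ker$, hence $\psi$.

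For the descent step, Corollary \ref{cor-simply-conn-Levi} ensures $L^{\der}$ is simply connected, so Proposition \ref{prop-equiv-kunneth-form} applies to the $L$-variety $G$ and gives an isomorphism $\KLG \otimes_{\R(L)} \R(T) \cong \KTG$. Flatness of $\R(L) \to \R(T)$ yields $\R(L)/I\R(L) \otimes_{\R(L)} \R(T) = \R(T)/I\R(T)$, and after identifying both sides via these isomorphisms, the map $\psi \otimes_{\R(L)} \R(T)$ becomes a map $\R(T)/I\R(T) \to \KTG$. One checks it factors as the natural pullback $\R(T) = \Kz^T(*) \to \KTG$, which agrees with the isomorphism of Corollary \ref{cor-kthy-tor-act}. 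Thus $\psi \otimes_{\R(L)} \R(T)$ is an isomorphism, and faithful flatness of $\R(L)\hookrightarrow \R(T)$ forces $\psi$ itself to be an isomorphism.

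I expect the main obstacle to be the compatibility check in the descent step, namely that the map $\R(T)/I\R(T) \to \KTG$ induced by tensoring $\psi$ really does coincide with the isomorphism produced in Section \ref{sec-untwist}. This requires unwinding the chain of identifications $\R(T) \cong \Kz^B(*) \cong \Kz^{B\times G}(G) \to \Kz^{B\times T}(G) \cong \Kz^T(G/B) \cong \Kz^{T\times T}(G) \to \KTG$ used to obtain Corollary \ref{cor-kthy-tor-act} and verifying that their composite is the pullback from a point. The bundle-level verification that $I \subset \ker\psi$ is essentially formal once the $G$-module structure of $\rho$ is exploited to write down the trivializing automorphism.
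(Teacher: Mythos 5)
Your proposal is correct and follows essentially the same route as the paper: deduce $\KLG$ from $\KTG \cong \R(T)/I\R(T)$ via the equivariant K\"unneth formula (valid since $L^{\der}$ is simply connected by Corollary \ref{cor-simply-conn-Levi}) together with faithfully flat descent along $\R(L)\hookrightarrow \R(T)$. The only cosmetic difference is that you verify $I\R(L)\subset\ker\bigl(\R(L)\to\KLG\bigr)$ by an explicit bundle automorphism, whereas the paper extracts this, along with the rest of the exactness, in one stroke by descending the exact sequence over $\R(T)$; your compatibility check via functoriality of the restriction maps matches the paper's commutative diagram.
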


	\begin{proof}
	First, note that $G^{\der}$ simply connected implies that $L^{\der}$ simply connected (\ref{cor-simply-conn-Levi}) and hence $\R(L)\hookrightarrow \R(T)$ is faithfully flat. 	
	The following diagram of restriction maps is commutative. 
		\[\begin{tikzcd}
			& \R(T) \arrow[d, "\simeq"]& \R(L) \arrow[d, "\simeq"]\\ & \Kz^{T\times G}(G)\arrow[dl] \arrow[d, two heads, "r^{T\times G}_{T}"]& \Kz^{L\times G}(G) \arrow[d, "r^{L\times G}_{L}"] \arrow[l, hook']
			\\ \Kz^{T\times T}(G) \arrow[r, two heads] & \KTG & \KLG \arrow[l, hook']
		\end{tikzcd}\]
	By the equivariant Kunneth formula $r^{L\times G}_{L} \otimes _{\R(L)}\R(T) = r^{T\times G}_{T}$. Now we use that $\R(L)\hookrightarrow \R(T)$ is faithfully flat. Consider the sequence of $\R(L)$-modules
	\begin{equation}
 \label{tikz-diag-Kthy-L}
	    \begin{tikzcd}
		0 \arrow[r]& I\R(L) \arrow[r] & \R(L) \arrow[r, "r_{L}^{L\times G}"] & \KLG \arrow[r] & 0
	\end{tikzcd}
	\end{equation}
	Applying $-\otimes_{\R(L)}\R(T)$ gives the short exact sequence
	\[\begin{tikzcd}
		0 \arrow[r]& I\R(T) \arrow[r] & \R(T) \arrow[r] & \KTG \arrow[r] & 0 
	\end{tikzcd}\] using \ref{prop-equiv-kunneth-form} and the fact that $\R(T)$ is a flat $\R(L)$-module which tells us that $J\R(T) \simeq J \otimes _{\R(L)}\R(T)$ for the ideal $J := I\R(L) \subset \R(L)$.
	As $\R(L)\hookrightarrow \R(T)$ is faithfully flat, the sequence(\ref{tikz-diag-Kthy-L}) is exact and thus $r_{L}^{L\times G}$ is surjective with kernel $I\R(L)$.
	\end{proof} 
	\begin{corollary}
	\label{cor-main-thm}
	Let $(G,\mu)$ be a cocharacter datum and $L \coloneqq \Cent_{G}(\mu)$. Suppose that $G^{\der}$ is simply connected. 
	$$\Kz(\GZipmu) \cong \Kz^{L}(*)/I\Kz^{L}(*) \simeq \R(L)/I\R(L)$$
	\end{corollary}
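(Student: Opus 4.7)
The plan is to assemble the corollary from results already established in the excerpt with essentially no new work. First, I would invoke the presentation $\GZipmu \simeq [E\backslash G_k]$ of Proposition \ref{prop-gzip-presentation}, which by definition gives $\Kz(\GZipmu) = \Kz^{E}(G_k)$. The whole point is then to peel off the unipotent part of $E$ and recognise the remaining $L$-action as the $\varphi$-conjugation action of Theorem \ref{th-main-intext}.

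Next, I would use the split short exact sequence (\ref{tikzcd-ses-zipgroup})
\[
\begin{tikzcd}
1 \arrow[r] & \mathcal{R}_u(P)\times\mathcal{R}_u(Q) \arrow[r] & E \arrow[r] & L \arrow[r] & 0
\end{tikzcd}
\]
with unipotent kernel and reductive quotient. Theorem \ref{th-ignore-unip} applies verbatim and produces a ring isomorphism $\Kz^{E}(G_k) \xrightarrow{\sim} \Kz^{L}(G_k)$, where the $L$-action on $G_k$ is obtained by restricting the $E$-action along the splitting $L \hookrightarrow E$, $l \mapsto (l,\varphi(l))$. Chasing through the definition of the $E$-action $(x,y)\cdot g = xgy^{-1}$, this restricted action is precisely $l\cdot g = lg\varphi(l)^{-1}$, i.e. the $\varphi$-conjugation action. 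Hence $\Kz^E(G_k) \cong \KLG$ in the notation of the paper.

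Since $L = \Cent_G(\mu)$ is a Levi subgroup of $G$ and $G^{\der}$ is simply connected, the hypotheses of Theorem \ref{th-main-intext} are met (with $T\subset L$ any maximal torus), so I would apply it directly to conclude $\KLG \cong R(L)/IR(L)$ with $I = \{c-\varphi(c)\mid c\in R(G)\}$. The identification $R(L) \simeq \Kz^{L}(*)$ is the standard equivalence $\Vect^{L}(*)\simeq \Rep(L)$ recalled in the notation at the start of Section \ref{sec-eq-kthy}, which gives the second isomorphism of the corollary.

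There is no real obstacle here; the only point that requires a moment of care is checking that the $L$-action induced by the splitting $L\hookrightarrow E$ agrees with the $\varphi$-conjugation action used in the statement of Theorem \ref{th-main-intext}, so that the theorem actually applies. Once that matching is noted, the corollary is just the concatenation of Proposition \ref{prop-gzip-presentation}, Theorem \ref{th-ignore-unip}, and Theorem \ref{th-main-intext}.
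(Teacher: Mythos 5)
Your proposal is correct and follows exactly the paper's own argument: the presentation $\GZipmu \simeq [E\backslash G_k]$, the split exact sequence (\ref{tikzcd-ses-zipgroup}) with Theorem \ref{th-ignore-unip} to reduce to the $\varphi$-conjugation action of $L$, and then Theorem \ref{th-main-intext}. The extra care you take in checking that the restricted $E$-action along the splitting is the $\varphi$-conjugation action is a worthwhile detail the paper leaves implicit.
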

	\begin{proof}
		The zip group $E$ sits in the split short exact sequence (\ref{tikzcd-ses-zipgroup}) so by Theorem \ref{th-ignore-unip} we have $\Kz(\GZipmu) \simeq \KLG$. 
	\end{proof}

 \section{Further methods using invariants}
	\label{sec-altproof-comp-invariants}
	A potential alternative method for deducing $\KLG$ from $\KTG$ is presented in this section. This relies on a description of $\KLG$ as a ring of invariants in $\KTG$. There is an action of the Weyl group $W$ on $\KTG$. However, in general this is not enough to cut out $\KLG$ as the ring of invariants (\ref{example-weyl-action}) and we must enrich this to an action by a larger ring (\ref{subsec-hecke-invariants}) on the module $K_{\bullet,\varphi}^{T}(G) = \oplus_{n}\KTnG$. If this action preserves the $\Kz$-piece then we can deduce the ring $\KLG$. 
	\subsection{Weyl group action on equivariant algebraic \texorpdfstring{$K$}{}-theory}
	
	Let $G$ be a reductive $k$-group and $X$ a $G$-variety. The action of the Weyl group $W = N_{G}(T)/T$ on $\Kz^{T}(X)$ is defined as follows. There is an action of $N_{G}(T)$ on the set of isomorphism classes $\Vect^{T}(X)/\simeq$ given by $n\cdot E \coloneqq n^{-1,*}E$. This action is trivial on $T \subset N_{G}(T)$ so it descends to an action of $W$ on $\Vect^{T}(X)/\simeq$ and hence on $\Kz^{T}(X)$.
	
	\vspace{3mm}
	
	Unfortunately, the analogue of Lemma \ref{le-weyl-action-repring} no longer holds. In general, it is \textbf{not} true that $ \Kz^G(X)\simeq \Kz^T(X)^W$, see (\ref{example-weyl-action}) for an example where this doesn't hold. This issue is resolved in \cite{MegumiLandweberSjamaar} by recourse to a Hecke action on $K_{\bullet}^{T}(X)$ which extends the Weyl group action, see below. For completeness we include the following counterexample to the analogue of Lemma \ref{le-weyl-action-repring}, adapted from \cite[4.8]{MegumiLandweberSjamaar}, \cite[4.5]{McLeod}.
	\begin{example}[$\Kz^{G}(X)\neq \Kz^{T}(X)^{W}$]
		\label{example-weyl-action}
		
		Consider $G = \SL_2$, $T = \Diag_{2}$. Then $\R(T) \cong \mathbb{Z}[x,x^{-1}]$ has a basis $\{1,x\}$ over the subring $\R(G) = \mathbb{Z}[x+x^{-1}]$. The Weyl group $W =\mathbb{Z}/2$ interchanges $x$ and $x^{-1}$. If $X$ is a $G$-variety then $\Kz^{T}(X) \simeq \R(T)\otimes_{\R(G)}\Kz^{G}(X) \cong \Kz^G(X)\oplus x\cdot\Kz^G(X)$. The Weyl invariants are
		\begin{equation*}
			\begin{aligned}
				\Kz^T(X)^W &= \{a+bx \mid a,b \in \Kz^G(X) \text{ and }2b = (x+x^{-1})b = 0\} \\&\cong
				\Kz^G(X)\oplus \{f \in \Kz^{G}(X) \mid 2f = f \text{ and }(x+x^{-1})f = f\}
			\end{aligned}
		\end{equation*}
		%$$\Kz^T(X)^W = \{a+bx \mid a,b \in \Kz^G(X) \text{ and }a+bx = a+bx^{-1}\}$$
		%Now $a+bx^{-1} = a + b(x + x^{-1}) -bx$ so $a +bx = a+bx^{-1}$ if and only if $2b = %(x+x^{-1})b = 0$. 
		If $Y$ is a variety and $X = Y \times G$ is the trivial $G$-torsor over $Y$ then $\Kz^G(X) \simeq \Kz(Y)$ and $\R(G)$ acts by $(x+x^{-1}).c = 2c$. So for $Y$ such that $\Kz(Y)$ has $2$-torsion, the Weyl invariant ring $\Kz^T(X)^{W}$ is strictly larger than $\Kz^G(X)$. 
	\end{example}
	%\begin{remark}CITE
	%	The problem is the existence of torsion in $\Kz^{T}(X)$. Indeed $\Kz^T(X)_\mathbb{Q}^W \simeq \Kz^G(X)_\mathbb{Q}$.
	%\end{remark} 
	
	\subsection{Augmentation left ideal and Hecke invariants}
	\label{subsec-hecke-invariants}
	This section paraphrases the results of \cite{MegumiLandweberSjamaar} which build on work of Demazure. The aim is to enrich the Weyl group action to an action by a submodule of $\End_{\R(G)}(\R(T))$ called the \textit{augmentation left ideal}. For every root $\alpha \in \Phi(G,T)$ there is an operator $\delta_{\alpha} \in \End_{\R(G)}(\R(T))$ such that the collection satisfies the following property:
	\begin{lemma}[\cite{Demazure} Theorem 1]
		Let $w \in W$. Given two reduced expressions $w = s_{\alpha_{1}}\ldots s_{\alpha_{n}} = s_{\beta_{1}}\ldots s_{\beta_{n}}$ in terms of simple reflections, the composition $\delta_{\alpha_{1}}\ldots \delta_{\alpha_{n}} = \delta_{\beta_{1}}\ldots \delta_{\beta_{n}}$ and is denoted $\delta_{w}$.
	\end{lemma}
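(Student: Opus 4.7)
The plan is to invoke Matsumoto's theorem (Tits' solution of the word problem for Coxeter groups): any two reduced expressions for the same Weyl group element $w$ are connected by a sequence of braid moves of the form $s_\alpha s_\beta s_\alpha \cdots = s_\beta s_\alpha s_\beta \cdots$, each with $m_{\alpha\beta}$ factors, where $m_{\alpha\beta}$ is the order of $s_\alpha s_\beta$ in $W$ and $\alpha,\beta$ are simple. Given this, the statement reduces to proving that the Demazure operators themselves satisfy the braid relation
\[
\underbrace{\delta_\alpha \delta_\beta \delta_\alpha \cdots}_{m_{\alpha\beta}\text{ factors}} = \underbrace{\delta_\beta \delta_\alpha \delta_\beta \cdots}_{m_{\alpha\beta}\text{ factors}}
\]
for every pair of simple roots $\alpha,\beta$. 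Once this is established, the common value of any reduced-expression product may unambiguously be denoted $\delta_w$.

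To verify the braid relation, I would first show that each $\delta_\alpha$ factors through the rank-$1$ subdatum attached to $\alpha$, so that any identity involving $\delta_\alpha$ and $\delta_\beta$ can be pulled back to a statement inside $\R(T_0)$, where $T_0$ is the maximal torus of the rank-$2$ Levi $L_{\alpha,\beta}$ generated by $\alpha^\vee$ and $\beta^\vee$. This reduces the problem to a finite checklist of rank-$2$ root systems, namely $A_1 \times A_1$ (with $m=2$, where the operators plainly commute), $A_2$ ($m=3$), $B_2$ ($m=4$), and $G_2$ ($m=6$).

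In each rank-$2$ case one computes directly using the explicit formula for Demazure's operator on $\R(T_0)$, namely $\delta_\alpha(f) = (f - e^{-\alpha} s_\alpha(f))/(1 - e^{-\alpha})$ (or the normalization used by the reference), testing the identity on a basis of $\R(T_0)$ over $\R(T_0)^{\langle s_\alpha, s_\beta\rangle}$. Because each $\delta_\alpha$ is $\R(G)$-linear and annihilates $s_\alpha$-invariants, the check on a basis is enough.

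The main obstacle is the $G_2$ computation, where the expansion of six alternating Demazure operators is genuinely intricate; this is the step that motivates Demazure's original case-by-case verification and cannot be avoided by a more abstract argument at this level of generality. The $A_2$ and $B_2$ cases are short calculations by comparison, and $A_1 \times A_1$ is immediate. With all four braid relations in hand, Matsumoto's theorem then gives the claim.
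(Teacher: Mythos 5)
The paper gives no proof of this lemma, simply citing Demazure's Theorem 1, and your outline --- Matsumoto's theorem reducing the claim to the braid relations for the $\delta_\alpha$, followed by the rank-$2$ verification for $A_1\times A_1$, $A_2$, $B_2$ and $G_2$ --- is precisely Demazure's classical argument, so the approaches agree. One small slip: with the normalization you wrote down, $\delta_\alpha$ \emph{fixes} $s_\alpha$-invariants and is linear over them rather than annihilating them; it is this linearity over $\R(T)^{\langle s_\alpha,s_\beta\rangle}$ that justifies checking the braid relation on a set of module generators.
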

	Consider the subring $D \subset \End_{\R(G)}(\R(T))$ generated by the operators $\delta_{\alpha}$ and elements of $\R(T)$ acting by multiplication. The \textit{augmentation left ideal} is defined to be $\mathcal{I}(D) = \{\Delta \in D \mid \Delta(1)=0\}$. The augmentation ideal of $\mathbb{Z}[W]$ is contained in $\mathcal{I}(D)$ since $w(1) = 1 $ for $w \in W \subset \mathbb{Z}[W]$.
	\begin{proposition}[\cite{MegumiLandweberSjamaar} 6.5]
		$K_{\bullet}^{T}(X) = \oplus_{n}\Kn^{T}(X)$ is a module over the ring $D$.
	\end{proposition}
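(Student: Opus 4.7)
The plan is to realise $K_{\bullet}^{T}(X)$ as $K_{\bullet}^{G}(X \times G/B)$ and to define the $D$-action on it geometrically via push-pull along the standard $\mathbb{P}^{1}$-bundles indexed by simple roots.

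First, by Proposition \ref{prop-Thoma-subgp} we have $K_{\bullet}^{T}(X) \cong K_{\bullet}^{G}(X \times G/T)$; applying Theorem \ref{th-ignore-unip} to the $U$-bundle $G/T \to G/B$ (where $U = B/T$ is the unipotent radical of $B$) further identifies this with $K_{\bullet}^{G}(X \times G/B)$. Under this identification, multiplication by an element of $\R(T) \cong K_{0}^{G}(G/B)$ corresponds to tensoring with its pullback along the projection $X \times G/B \to G/B$.

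For each simple root $\alpha$, let $P_{\alpha} \supset B$ be the associated minimal standard parabolic; the projection $\pi_{\alpha} \colon X \times G/B \to X \times G/P_{\alpha}$ is a smooth projective $\mathbb{P}^{1}$-bundle and hence admits both a pullback $\pi_{\alpha}^{*}$ and a proper pushforward $(\pi_{\alpha})_{*}$ on equivariant $K$-theory. I would define
\[
    \delta_{\alpha} := \pi_{\alpha}^{*} \circ (\pi_{\alpha})_{*} \colon K_{\bullet}^{G}(X \times G/B) \to K_{\bullet}^{G}(X \times G/B),
\]
which is $\R(G)$-linear by the projection formula. For $X = *$ this operator recovers Demazure's $\delta_{\alpha}$ on $\R(T) = K_{0}^{G}(G/B)$ by the standard computation of the Euler characteristic of line bundles on the $\mathbb{P}^{1}$-fibre $P_{\alpha}/B$.

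Finally, one must verify that the relations cutting $D$ out of $\End_{\R(G)}(\R(T))$ are satisfied on $K_{\bullet}^{G}(X \times G/B)$. The commutation of $\delta_{\alpha}$ with multiplication by $f \in \R(T)$ is the $K$-theoretic Leibniz rule, which follows from the projection formula for $\pi_{\alpha}$. The main obstacle is the braid relations between distinct $\delta_{\alpha}, \delta_{\beta}$: I would establish them by passing to the standard parabolic $P_{\alpha,\beta}$ of type $\{\alpha,\beta\}$ and exploiting that the two reduced expressions for the longest element of the rank-$2$ Weyl subgroup correspond to two Bott-Samelson resolutions of $G/B \to G/P_{\alpha,\beta}$ whose iterated push-pulls along $\pi_{\alpha}$ and $\pi_{\beta}$ compute the same class in $K_{\bullet}^{G}(X \times G/B)$. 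An alternative is to reduce to the universal case $X = *$ by a localisation-and-flat-base-change argument along the structure morphism $X \to *$, where the relations hold by Demazure's theorem and the very definition of $D$.
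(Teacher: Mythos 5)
The paper gives no proof of this statement at all --- it is quoted verbatim from the cited reference --- so the comparison to make is with the proof in that source, and your push--pull construction along the minimal parabolic fibrations is indeed the standard argument given there. Your sketch is in the right spirit, but it has one misattribution and one genuine logical gap.

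The misattribution: Theorem \ref{th-ignore-unip} is about a split extension of the \emph{acting group} by a unipotent normal subgroup, acting on a fixed variety; it says nothing about the fibration $G/T \to G/B$. (Also, $B/T$ is not a group quotient --- $T$ is not normal in $B$ --- it is merely a variety isomorphic to $\mathcal{R}_{u}(B)$.) The identification you want runs in the other order: $K_{\bullet}^{G}(X\times G/B)\cong K_{\bullet}^{B}(X)$ by Proposition \ref{prop-Thoma-subgp}, and then $K_{\bullet}^{B}(X)\cong K_{\bullet}^{T}(X)$ by Theorem \ref{th-ignore-unip} applied to $1\to \mathcal{R}_{u}(B)\to B\to T\to 1$. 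With that fixed, the definition $\delta_{\alpha}=\pi_{\alpha}^{*}\circ(\pi_{\alpha})_{*}$ is correct, $\delta_{\alpha}^{2}=\delta_{\alpha}$ comes for free from $(\pi_{\alpha})_{*}\pi_{\alpha}^{*}=\mathrm{id}$, and the braid relations do follow from the Bott--Samelson argument you describe --- though that argument secretly uses $R\pi_{*}\mathcal{O}=\mathcal{O}$ for the relevant resolutions (rational singularities of Schubert varieties), which should be acknowledged.

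The genuine gap: checking that your operators satisfy the quadratic, braid, and twisted-commutation relations only exhibits $K_{\bullet}^{T}(X)$ as a module over the algebra \emph{abstractly presented} by those relations. The ring $D$ of the paper is not defined by a presentation; it is a concrete subring of $\End_{\R(G)}(\R(T))$, and a priori its generators could satisfy further relations in $\End_{\R(G)}(\R(T))$ that your geometric operators violate. To conclude one needs the structure theorem that $D$ is a free left $\R(T)$-module with basis $\{\delta_{w}\}_{w\in W}$, so that the listed relations really do present $D$; this is an essential ingredient of the cited proof and is missing from your sketch. Finally, your fallback reduction to $X=*$ by ``localisation and flat base change'' is not meaningful as stated: the only mechanism in the paper for transporting the universal case to general $X$ is the Künneth isomorphism $K_{\bullet}^{T}(X)\cong K_{\bullet}^{G}(X)\otimes_{\R(G)}\R(T)$ of Proposition \ref{prop-equiv-kunneth-form}, which requires $G^{\der}$ simply connected. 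Under that hypothesis, letting $D$ act through the $\R(T)$ factor is actually the shortest complete proof --- but then one must still check that this action coincides with your geometric operators.
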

	Given a $D$-module $A$, define the Hecke invariants to be $$A^{\mathcal{I}(D)} \coloneqq \{f \in A \mid \Delta(f) = 0 \text{ for all }\Delta \in \mathcal{I}(D)\}$$ This is a submodule of $A$ over the ring $\R(G)$.
	\begin{theorem}[\cite{MegumiLandweberSjamaar}, 6.6]
		\label{th-hecke-action-inv}
		Suppose that $G^{\der}$ is simply connected and $G$ acts on a variety $X$. The restriction map $K_{\bullet}^{G}(X)\hookrightarrow K_{\bullet}^{T}(X)$ induces an isomorphism $K_{\bullet}^{T}(X)^{\mathcal{I}(D)} \simeq K_{\bullet}^{G}(X)$.
	\end{theorem}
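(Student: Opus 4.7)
The plan is to use the equivariant Kunneth formula to reduce to a statement about the $\mathcal{I}(D)$-invariants on $\R(T)$, settle that scalar case using Demazure's theorem, and then promote the equality to arbitrary $K_\bullet^G(X)$ using the Steinberg--Pittie freeness theorem. Since $G^{\der}$ is simply connected, Proposition \ref{prop-equiv-kunneth-form} gives $K_\bullet^T(X) \cong K_\bullet^G(X) \otimes_{\R(G)} \R(T)$, and because every element of $D$ is $\R(G)$-linear on $\R(T)$, under this identification $\Delta \in \mathcal{I}(D)$ acts as $\mathrm{id}_{K_\bullet^G(X)} \otimes \Delta$ on the right-hand factor. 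For $m \in K_\bullet^G(X)$ identified with $m \otimes 1$, we have $\Delta(m \otimes 1) = m \otimes \Delta(1) = 0$, so $K_\bullet^G(X) \subset K_\bullet^T(X)^{\mathcal{I}(D)}$; picking an $\R(G)$-basis of $\R(T)$ containing $1$ (which exists by Lemma \ref{le-steinberg-pittie}) simultaneously shows the restriction map is split injective.

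For the reverse inclusion, I would first prove the scalar case $\R(T)^{\mathcal{I}(D)} = \R(G)$. Each simple Demazure operator $\delta_\alpha$ lies in $\mathcal{I}(D)$ since $\delta_\alpha(1) = 0$, and its explicit formula on the Laurent ring $\R(T)$ yields the equivalence $\delta_\alpha(f) = 0 \Leftrightarrow s_\alpha(f) = f$. Hence any $f \in \R(T)^{\mathcal{I}(D)}$ is fixed by every simple reflection, and so by the whole Weyl group, giving $f \in \R(T)^W = \R(G)$ by Lemma \ref{le-weyl-action-repring}.

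Finally, I would transfer this equality to general $K_\bullet^G(X)$. Fix an $\R(G)$-basis $1 = e_1, \ldots, e_{\rkWeyl}$ of $\R(T)$. The $\mathcal{I}(D)$-invariance condition on $\sum_i m_i \otimes e_i$ is a system of $\R(G)$-linear equations in the coefficients $m_i$ with the same coefficient matrix as in the scalar case. The scalar solution set being exactly $\R(G) \cdot (1, 0, \ldots, 0)$ says that the quotient $\R(T)/\R(G) \cong \R(G)^{\rkWeyl - 1}$ is a \emph{free} $\R(G)$-module, so the split short exact sequence $0 \to \R(G) \to \R(T) \to \R(G)^{\rkWeyl - 1} \to 0$ remains split after base change by $K_\bullet^G(X)$, which identifies the solution set inside $K_\bullet^T(X)$ with $K_\bullet^G(X) \otimes_{\R(G)} \R(G) = K_\bullet^G(X)$. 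This transfer is the main obstacle: a priori, systems of $\R(G)$-linear relations can acquire new solutions after tensoring with an arbitrary $\R(G)$-module, and it is precisely the purity of $\R(G) \subset \R(T)$ -- equivalent to the Steinberg--Pittie freeness and hence to the simply-connectedness hypothesis -- that rules this out here. Without the hypothesis, $\R(T)$ need not be free over $\R(G)$ and this step of the argument collapses.
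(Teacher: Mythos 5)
The paper itself gives no proof of this result---it is quoted directly from Harada--Landweber--Sjamaar---so your argument has to stand on its own. Your overall strategy (use the K\"unneth isomorphism $K_\bullet^T(X)\cong K_\bullet^G(X)\otimes_{\R(G)}\R(T)$, compute the scalar case $\R(T)^{\mathcal{I}(D)}=\R(G)$, then transfer to arbitrary coefficients) is the right shape, and the easy inclusion $K_\bullet^G(X)\subseteq K_\bullet^T(X)^{\mathcal{I}(D)}$ together with split injectivity via a Steinberg basis containing $1$ is fine. Two smaller caveats first: (i) the claim that $\Delta\in D$ acts as $\mathrm{id}\otimes\Delta$ under the K\"unneth isomorphism does not follow merely from $\R(G)$-linearity of $\Delta$ (that only makes $\mathrm{id}\otimes\Delta$ well defined); it requires knowing that the K\"unneth map intertwines the geometrically defined $D$-action on $K_\bullet^T(X)$ with $\mathrm{id}\otimes\Delta$, which is part of what [HLS] prove and should be cited or argued. (ii) In the standard normalisation $\delta_\alpha(1)=1$, so $\delta_\alpha\notin\mathcal{I}(D)$; the scalar computation is better run through the paper's own observation that the augmentation ideal of $\mathbb{Z}[W]$ lies in $\mathcal{I}(D)$, giving $\R(T)^{\mathcal{I}(D)}\subseteq\R(T)^W=\R(G)$, with the reverse inclusion from $\R(G)$-linearity.

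The genuine gap is in your final transfer step. You correctly identify the danger (a system of $\R(G)$-linear equations can acquire new solutions after tensoring with a module $M=K_\bullet^G(X)$), but the mechanism you invoke to rule it out does not work: knowing that the scalar solution set $\bigcap_\Delta\ker(\Delta)=\R(G)\cdot 1$ is a free direct summand with free complement says nothing about $\bigcap_\Delta\ker(\mathrm{id}_M\otimes\Delta)$, because the obstruction lives in $\mathrm{Tor}_1^{\R(G)}$ of the \emph{cokernels} of the operators, not of the quotient $\R(T)/\R(G)$. Concretely, over $R=\mathbb{Z}$ take $N=\mathbb{Z}^2$ with the single operator $\Delta(c_1,c_2)=(0,2c_2)$: the kernel is $\mathbb{Z}\cdot(1,0)$, a free summand with free complement, yet for $M=\mathbb{Z}/2$ the kernel of $\mathrm{id}_M\otimes\Delta$ is all of $M^2$. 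Splitness of $0\to\R(G)\to\R(T)\to\R(G)^{\rkWeyl-1}\to 0$ after base change only gives injectivity of $M\to M\otimes\R(T)$, not the computation of invariants. The correct fix---and essentially what [HLS] do---is to observe that $D$ contains the Demazure operator $\delta_{w_0}$ of the longest Weyl element, which is an $\R(G)$-linear \emph{idempotent} projecting $\R(T)$ onto $\R(G)$ with $\delta_{w_0}(1)=1$, so that $1-\delta_{w_0}\in\mathcal{I}(D)$. Any $x\in K_\bullet^T(X)^{\mathcal{I}(D)}$ then satisfies $x=(\mathrm{id}_M\otimes\delta_{w_0})(x)\in M\otimes\R(G)=K_\bullet^G(X)$, since kernels and images of idempotents do commute with base change. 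With that single operator the hard inclusion is immediate and your freeness discussion becomes unnecessary.
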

	If the $D$-action preserves $\Kz^{T}(X)$ then this provides a method for deducing $\KLG$ from $\KTG$, which is not used in the main proof of the computation, but is more in line with Brokemper's proof in the Chow ring case \cite{Brokemper}. 
    \begin{proposition}
        Assume that $\mathcal{I}(D)$ preserves $\KTG$. Then $$\KLG \simeq \R(T)^{W_{L}}/I\R(T)^{W_{L}}$$
    \end{proposition}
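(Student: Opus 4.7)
The plan is to combine Theorem \ref{th-hecke-action-inv} applied to the Levi $L$ with the torus calculation of Section \ref{sec-untwist}. Since $L^{\der}$ is simply connected by Corollary \ref{cor-simply-conn-Levi}, the theorem applies to the $L$-variety $G$ (with $\varphi$-twisted conjugation). The Demazure subring $D_L \subset D$ attached to $(L,T)$ satisfies $\mathcal{I}(D_L) \subset \mathcal{I}(D)$, so the standing hypothesis that $\mathcal{I}(D)$ preserves $\KTG$ implies the same for $\mathcal{I}(D_L)$. Theorem \ref{th-hecke-action-inv} then yields $\KLG \cong (\KTG)^{\mathcal{I}(D_L)}$.

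Next, Corollary \ref{cor-kthy-tor-act} gives $\KTG \cong \R(T)/I\R(T)$. This isomorphism is $D_L$-equivariant, where the Demazure--Hecke $D_L$-action on $\R(T) = \Kz^T(*)$ descends to the quotient: $I\R(T)$ is $D_L$-stable since $I \subset \R(G)$ and $D_L \subset \End_{\R(G)}(\R(T))$. Thus it suffices to identify $(\R(T)/I\R(T))^{\mathcal{I}(D_L)}$ with $\R(T)^{W_L}/I\R(T)^{W_L}$.

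For this identification, applying Theorem \ref{th-hecke-action-inv} to $L$ acting on a point gives $\R(T)^{\mathcal{I}(D_L)} = \R(L)$, which equals $\R(T)^{W_L}$ by Lemma \ref{le-weyl-action-repring}. By Lemma \ref{le-steinberg-pittie} applied to $L$ (valid by Corollary \ref{cor-simply-conn-Levi}), $\R(T)$ is free over $\R(L)$; write $\R(T) \cong \R(L) \oplus M$ as $\R(L)$-modules. Then $I\R(T) \cong I\R(L) \oplus IM$, so $I\R(T) \cap \R(L) = I\R(L)$, giving $(I\R(T))^{\mathcal{I}(D_L)} = I\R(T) \cap \R(T)^{\mathcal{I}(D_L)} = I\R(L)$. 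Applying $\mathcal{I}(D_L)$-invariants to the exact sequence $0 \to I\R(T) \to \R(T) \to \R(T)/I\R(T) \to 0$ therefore yields an injection $\R(L)/I\R(L) \hookrightarrow (\R(T)/I\R(T))^{\mathcal{I}(D_L)}$. Surjectivity follows from the flat base change identification $\R(T)/I\R(T) \cong (\R(L)/I\R(L)) \otimes_{\R(L)} \R(T)$, provided that the Hecke-invariants of the right-hand side strip off the $\R(T)$-factor.

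The main obstacle is this last surjectivity step: showing that $\mathcal{I}(D_L)$-invariants commute with the flat base change $- \otimes_{\R(L)} \R(T)$. This is not automatic from left-exactness of the invariants functor. I expect it to follow from the Steinberg--Pittie freeness via a basis argument, verifying that the non-trivial $\R(L)$-summand $M \subset \R(T)$ contributes no new invariants modulo $I\R(T)$ (equivalently, that $M/IM$ has trivial $\mathcal{I}(D_L)$-invariants). A cleaner alternative would be to apply Theorem \ref{th-hecke-action-inv} to an $L$-variety $X$ whose torus-equivariant $\Kz$ realises $\R(T)/I\R(T)$ in a $D_L$-equivariant fashion, bypassing the direct invariants calculation.
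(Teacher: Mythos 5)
Your reduction to computing $(\R(T)/I\R(T))^{\mathcal{I}(D_{L})}$ is the same starting point as the paper's proof, and your injectivity argument --- using the Steinberg--Pittie splitting $\R(T)\cong\R(L)\oplus M$ to get $I\R(T)\cap\R(L)=I\R(L)$ and hence an injection $\R(L)/I\R(L)\hookrightarrow(\R(T)/I\R(T))^{\mathcal{I}(D_{L})}$ --- is a correct, and in fact more detailed, version of the first inclusion that the paper only asserts. But the surjectivity step you flag as an obstacle is a genuine gap, not a routine verification: taking invariants does not in general commute with the base change $-\otimes_{\R(L)}\R(T)$, and the danger that the complementary summand contributes extra invariants after passing to a quotient is exactly the phenomenon Example \ref{example-weyl-action} exhibits (there for Weyl invariants of a module with torsion). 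Your proposed fix --- checking directly that $M/IM$ has trivial $\mathcal{I}(D_{L})$-invariants --- is not carried out and is not obviously true.

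The paper closes the gap by sidestepping any direct computation of invariants of the quotient. One has a chain of ring inclusions $\R(L)/I\R(L)\subset\KLG\subset\KTG$, and $\KTG$ is free of the same finite rank $\rkLWeyl$ over both subrings: over $\KLG$ by the equivariant K\"unneth formula (Proposition \ref{prop-equiv-kunneth-form}) combined with the freeness of $\R(T)$ over $\R(L)$ of rank $\rkLWeyl$, and over $\R(L)/I\R(L)$ because $\R(T)/I\R(T)\cong(\R(L)/I\R(L))\otimes_{\R(L)}\R(T)$ by flatness. A lemma of Brokemper \cite[1.4]{brokemper2016chow} then forces the two subrings to coincide. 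This rank comparison is the ingredient missing from your argument; it is what substitutes for the commutation of $\mathcal{I}(D_{L})$-invariants with base change that you were hoping to establish.
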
 
    \begin{proof}
		Our assumption means that 
    $\KLG \simeq \KTG^{\mathcal{I}(D_{L})}$. Given the description $\KTG = \R(T)/I\R(T)$, there are inclusions of rings $$\R(T)^{\mathcal{I}(D_{L})}/I\R(T)^{\mathcal{I}(D_{L})} \subset \KLG \subset \KTG$$ To show that the first inclusion is an equality it suffices to prove that $\KTG$ is a free module of rank $\rkLWeyl$ over both subrings \cite[1.4]{brokemper2016chow}. The equivariant Kunneth formula \ref{prop-equiv-kunneth-form} implies that $\KTG$ is a free module of rank $\rkLWeyl$ over $\KLG$ since $\R(T)$ is free of rank $\rkLWeyl$ over $\R(L)$. We have that $\R(T)^{\mathcal{I}(D_{L})} = \R(T)^{W_{L}}$ and $\R(T)/I\R(T)$ is a free module of rank $\rkLWeyl$ over $\R(T)^{W_{L}}/I\R(T)^{W_{L}}$ since $\R(T)^{W_{L}}=\R(L)\hookrightarrow \R(T)$ is faithfully flat.
		
		\vspace{3mm}
		
		 Hence, $\KLG \cong \R(T)^{W_{L}}/I\R(T)^{W_{L}} \cong \R(L)/I\R(L)$.
	\end{proof}
    
\bibliographystyle{amsplain}
\bibliography{references}

\providecommand{\bysame}{\leavevmode\hbox to3em{\hrulefill}\thinspace}
\providecommand{\MR}{\relax\ifhmode\unskip\space\fi MR }
% \MRhref is called by the amsart/book/proc definition of \MR.
\providecommand{\MRhref}[2]{%
  \href{http://www.ams.org/mathscinet-getitem?mr=#1}{#2}
}
\providecommand{\href}[2]{#2}
\begin{thebibliography}{10}

\bibitem{brokemper2016chow}
Dennis Brokemper, \emph{On the {C}how ring of the {C}lassifying space of some
  {C}hevalley groups}, 2016, Preprint,
  \href{https://arxiv.org/abs/1611.07735}{arXiv:1611.07735}.

\bibitem{Brokemper}
Dennis Brokemper, \emph{On the {C}how ring of the stack of truncated
  {B}arsotti-{T}ate groups}, Pacific J. Math. \textbf{296} (2018), no.~2,
  271--303. \MR{3830836}

\bibitem{ChrissGinzburg}
Neil Chriss and Victor Ginzburg, \emph{Representation theory and complex
  geometry}, Modern Birkh\"{a}user Classics, Birkh\"{a}user Boston, Ltd.,
  Boston, MA, 2010, Reprint of the 1997 edition. \MR{2838836}

\bibitem{CooperGoldring}
Simon Cooper and Wushi Goldring, \emph{Hodge-{C}hern classes and
  strata-effectivity in tautological rings}, 2024, Preprint,
  \href{https://arxiv.org/abs/2404.05727}{arXiv:2404.05727}.

\bibitem{Demazure}
M.~Demazure, \emph{Automorphismes et d\'{e}formations des vari\'{e}t\'{e}s de
  {B}orel}, Invent. Math. \textbf{39} (1977), no.~2, 179--186. \MR{435092}

\bibitem{GoldringKoskivirta2}
Wushi Goldring and Jean-Stefan Koskivirta, \emph{Automorphic vector bundles
  with global sections on {$G$}-{\tt{z}ip}{$^{\mathcal{Z}}$}-schemes}, Compos.
  Math. \textbf{154} (2018), no.~12, 2586--2605. \MR{3870455}

\bibitem{Goldring-Koskivirta-rank-2-cones}
\bysame, \emph{Generalized global section cone conjectures: Intersection-sum,
  flag varieties and rank two}, 2024, Preprint available upon request.

\bibitem{MegumiLandweberSjamaar}
Megumi Harada, Gregory~D. Landweber, and Reyer Sjamaar, \emph{Divided
  differences and the {W}eyl character formula in equivariant {$K$}-theory},
  Math. Res. Lett. \textbf{17} (2010), no.~3, 507--527. \MR{2653685}

\bibitem{Malle-Testerman}
Gunter Malle and Donna Testerman, \emph{Linear algebraic groups and finite
  groups of {L}ie type}, Cambridge Studies in Advanced Mathematics, vol. 133,
  Cambridge University Press, Cambridge, 2011. \MR{2850737}

\bibitem{McLeod}
John McLeod, \emph{The {K}unneth formula in equivariant {$K$}-theory},
  Algebraic topology, {W}aterloo, 1978 ({P}roc. {C}onf., {U}niv. {W}aterloo,
  {W}aterloo, {O}nt., 1978), Lecture Notes in Math., vol. 741, Springer,
  Berlin, 1979, pp.~316--333. \MR{557175}

\bibitem{Merkurjev}
A.~S. Merkurjev, \emph{Comparison of the equivariant and the standard
  {$K$}-theory of algebraic varieties}, Algebra i Analiz \textbf{9} (1997),
  no.~4, 175--214.

\bibitem{MoonenWedhorn}
Ben Moonen and Torsten Wedhorn, \emph{Discrete invariants of varieties in
  positive characteristic}, Int. Math. Res. Not. (2004), no.~72, 3855--3903.
  \MR{2104263}

\bibitem{PWZ}
Richard Pink, Torsten Wedhorn, and Paul Ziegler, \emph{{$F$}-zips with
  additional structure}, Pacific J. Math. \textbf{274} (2015), no.~1, 183--236.
  \MR{3347958}

\bibitem{Steinberg}
Robert Steinberg, \emph{On a theorem of {P}ittie}, Topology \textbf{14} (1975),
  173--177. \MR{372897}

\bibitem{Thomason}
R.~W. Thomason, \emph{Algebraic {$K$}-theory of group scheme actions},
  Algebraic topology and algebraic {$K$}-theory ({P}rinceton, {N}.{J}., 1983),
  Ann. of Math. Stud., vol. 113, Princeton Univ. Press, Princeton, NJ, 1987,
  pp.~539--563. \MR{921490}

\bibitem{Tits}
J.~Tits, \emph{Repr\'{e}sentations lin\'{e}aires irr\'{e}ductibles d'un groupe
  r\'{e}ductif sur un corps quelconque}, J. Reine Angew. Math. \textbf{247}
  (1971), 196--220. \MR{277536}

\bibitem{Yaylali}
Can Yaylali, \emph{Motivic homotopy theory of the classifying stack of finite
  groups of {L}ie type}, 2023, Preprint,
  \href{https://arxiv.org/abs/2306.09808}{arXiv:2306.09808}.

\end{thebibliography}

\end{document}